\newtheorem{theorem}{Theorem}[section]
\newtheorem{proposition}[theorem]{Proposition}
\newtheorem{corollary}[theorem]{Corollary}
\newtheorem{lemma}[theorem]{Lemma}
\newtheorem{example}[theorem]{Example}
\newtheorem{remark}[theorem]{Remark}
\newtheorem{question}[theorem]{Question}
\newtheorem{definition}[theorem]{Definition}
\newenvironment{proof}{\noindent{\sc Proof.}}{\quad\qed\medskip}
\newcommand{\Z}{{\mathbb Z}}
\newcommand{\C}{{\cal C}}
\newcommand{\Hom}{{\rm Hom}}
\newcommand{\Ext}{{\rm Ext}}
\newcommand{\Ker}{{\rm Ker}\,}
\newcommand{\colim}{{\rm colim}\,}
\newcommand{\qed}{\quad\lower0.05cm\hbox{$\Box$}}
\newcommand{\downarrowright}[1]{\downarrow
\rlap{\raise0.1cm\hbox{$\scriptstyle{#1}$}}}
\newcommand{\downarrowleft}[1]{\rlap{\kern-0.2cm
\raise0.1cm\hbox{$\scriptstyle{#1}$}}\downarrow}
\newcommand{\uparrowright}[1]{\uparrow
\rlap{\lower0.1cm\hbox{$\scriptstyle{#1}$}}}
\newcommand{\uparrowleft}[1]{\rlap{\kern-0.2cm
\lower0.1cm\hbox{$\scriptstyle{#1}$}}\uparrow}
\newcommand{\epi}{\mbox{$\to$\hspace{-0.35cm}$\to$}}
\newcommand{\Zup}{\Z[1/p]}
\newcommand{\Cof}{{\rm Cof}}
\begin{document}
\setcounter{page}{1}
\title{Generators and closed classes of groups}
\footnotetext{
{\bf Mathematics Subject Classification (2010)}: Primary: 20E34;  Secondary: 20E22, 55P60.

{\bf Keywords:} co-reflections, cellular covers, direct limits, extensions, radicals.

The first author was partially supported by FEDER-MEC grants MTM2010-20692 and MTM-2016-76453-C2-1-P. Second-named author was supported by grant MTM2016-76453-C2-2-P, and grant FQM-213 of the Junta de Andaluc\'{\i}a.}

\author{Ram\'on Flores and Jos\'e L. Rodr\'{\i}guez}
\date{September 30th, 2020}

\maketitle

\begin{abstract}
We show that in the category of groups, every singly-generated class which is closed under isomorphisms,
direct limits and extensions is also singly-generated under isomorphisms and
direct limits, and in particular is co-reflective. In this way, we extend to the group-theoretic framework
the topological analogue proved by Chach\'olski, Parent and Stanley in 2004. We also establish several new relations
between singly-generated closed classes.

\end{abstract}


\setcounter{equation}{0}

\section{Introduction}

 In a category of $R$-modules over a certain ring $R$, the study of the closure of different classes under different operations has been a hot research topic for decades (see \cite{GT06} for a thorough approach). The operations include extensions, subgroups, quotients, etc. In the case of $R=\mathbb{Z}$, we are dealing, of course, with operations in the category of abelian groups.

There is another different but related problem. Given a collection $\mathbf{D}$ of objects in a class $\mathcal{C}$, the smallest class that contains $\mathbf{D}$ and is closed under certain operations is called the class generated by $\mathbf{D}$ under these operations. When this class is equal to $\mathcal{C}$, it is said that $\mathcal{C}$ is \emph{generated} by $\mathbf{D}$ under the operations, and the elements of $\mathbf{D}$ are called \emph{generators} of $\mathcal{C}$. In this situation, if $\mathbf{D}$ consists of only one object, the class $\mathcal{C}$ is said to be \emph{singly-generated} under the operations. Now the question is clear: given a class $\mathcal{C}$ defined by a certain property, is it possible to construct a generator for it? A standard reference for the topic is \cite{Adamek}, and related problems have been recently treated in \cite{CGR11}.

In this paper, we will deal with different versions of these two problems in the category of groups. Rather than from the vast literature in Module and Category Theory related to these topics, our motivation and strategy come from the field of Homotopy Theory. From the nineties, Bousfield, Farjoun and Chach\'olski (\cite{Bou94}, \cite{Cha96}, \cite{F97}) among others, developed an ambitious program whose main goal was to classify spaces and spectra in terms of classes defined by certain operations, particularly homotopy direct limits (cellular classes) and fibrations (periodicity classes). Taking account of this framework, the fundamental group functor gives a natural (but non-trivial) way to understand closed classes of groups from the same viewpoint.

We are mainly interested in three classes of operations in the class of groups, namely taking direct limits, constructing extensions from a given kernel and cokernel, and taking quotients. The relation between the closed classes under these operations has been already studied in \cite{Cha14} from a different point of view. We focus on singly-generated classes, and in fact our first main result is a generation result. Given a group $A$, we denote by $\mathcal{C}(A)$ the class generated by $A$ under direct limits and isomorphisms, and by $\overline{\mathcal{C}(A)}$ the class generated by $A$ under direct limits, extensions and isomorphisms. The elements of $\mathcal{C}(A)$ are usually called $A$-\emph{cellular} groups, and the elements of $\overline{\mathcal{C}(A)}$ are called $A$-\emph{acyclic} groups. Then:

\vspace{.5cm}

\noindent
\textbf{Theorem \ref{maintheorem}} {\it For any group $A$, there exists a group $B$ such that $\overline{\mathcal{C}(A)}=\mathcal{C}(B)$}.

\vspace{.3cm}
Moreover, we are able to describe $B$ in terms of groups of the class $\overline{\mathcal{C}(A)}$. This statement allows to generalize results of \cite{RS01}, where the case in which the Schur multiplier of $A$ is trivial was studied, and also of \cite{BC99} for the case of groups with trivial ordinary homology. It can also be seen as the group-theoretic version of the main theorem of \cite{CP04}, and it is in the spirit (see last section) of the generators of radicals described in \cite{CRS99}. We remark that the proof of \cite{CP04} cannot be translated to the category of groups, because the simplicial structure of the category of spaces is crucial in it.

Section~4 is devoted to establish relations between the closed classes under the operations mentioned above. The hierarchy is the following:
$$
\xymatrix{\mathcal{C}(A) \ar@{^{(}->}[r] \ar@{^{(}->}[d] &  \overline{\mathcal{C}(A)} \ar@{^{(}->}[d] \\
\mathcal{C}_q(A) \ar@{^{(}->}[r] &  \mathcal{C}_t(A) }
$$
where $\mathcal{C}_q(A)$ is the closure of $\mathcal{C}(A)$ under quotients and $\mathcal{C}_t(A)$ is the closure of $C(A)$ under quotients and extensions. All these classes turn out to be co-reflective. Furthermore, we find an example where the four classes are distinct (see Example~\ref{alldiferent}).

The co-reflections associated to the classes $\mathcal{C}(A)$, $\mathcal{C}_q(A)$ and $\mathcal{C}_t(A)$ are given respectively by the $A$-cellular cover $C_A$, the $A$-socle $S_A$ and the $A$-radical $T_A$, which are well-known notions in Group/Module Theory (see definitions in Section \ref{background}). However, for the class $\overline{\mathcal{C}(A)}$, the existence of the corresponding co-reflection $D_A$ had been previously established \cite{RS01} only for groups for which there exists a two-dimensional Moore space, and this goal was achieved only by topological methods. This gap in the theory is filled in this paper making use of Theorem \ref{maintheorem}, that guarantees the existence of $D_A$ for any group $A$. Roughly speaking, the philosophy is the following: given a group $G$, a singly-generated class $\mathcal{C}$ and the co-reflection $F$ associated to a certain operation, the kernel and the cokernel of the homomorphism $F(G)\rightarrow G$ measure how far $G$ is from belonging to the class generated from $\mathcal{C}$ under the operation. We prove how the co-reflections define the corresponding closed classes, describe when possible the kernels, and discuss, for different instances of the generator, the (in)equality between the corresponding classes. In particular, we prove the following:

\vspace{.3cm}

\noindent
\textbf{Proposition \ref{TASA}}
{\it Suppose that for a group $A$ the class $\C(A)$ is closed under extensions, i.e. $\mathcal{C}(A)=\overline{\mathcal{C}(A)}$. Then $\mathcal{C}_q(A)=\mathcal{C}_t(A)$.
}
\vspace{.3cm}

\noindent
The converse of this result (stated as Question \ref{SoT}) is not true in general. Section \ref{Burnside} is devoted to give a counterexample that involves the Burnside idempotent radical and the Burnside group $B(2,q)$. It is also an interesting question to find groups for which this converse is true. In fact, we are not even aware of an example of any example of abelian group $A$ such that $\mathcal{C}_q(A)=\mathcal{C}_t(A)$, but $\mathcal{C}(A)\neq\overline{\mathcal{C}(A)}$ (or equivalently $S_A=T_A$ and $C_A\neq D_A$); see Proposition \ref{additive} and the subsequent discussion.

All the classes that appear in this paper will be supposed to be closed under isomorphisms. Moreover, the axioms of ZFC will be assumed throughout the paper.


\section{(Co)reflections}
\label{background}

\subsection{Background on co-reflections}

In this section we introduce the main notions that will be used throughout the rest of our paper. We will start with the classical categorical definition of \emph{co-reflection}, that will be crucial for us.


\begin{definition}

{\rm
Given an inclusion of categories ${\bf C'}\subseteq {\bf C}$, where ${\bf C'}$ is full in ${\bf C}$, ${\bf C'}$ is \emph{co-reflective} in ${\bf C}$ if the inclusion functor possesses a right adjoint $(F,\mu)$, called \emph{co-reflection}.
}
\end{definition}

A good survey about the topic from the categorical point of view can be found in \cite{Adamek}. We recall here some basic facts and notations that will be used later.

	{\rm
	\begin{enumerate}
	\item Given a co-reflective subcategory ${\bf C'}\subseteq {\bf C}$, the associated co-reflection $F: {\bf C}\to {\bf C'}$ is unique up to natural isomorphism.
	\item Objects $X$ satisfying $FX\cong X$ are called {\it $F$-colocal}, and the morphisms $X\to Y$ inducing isomorphisms $FX\to FY$, are called {\it $F$-colocal equivalences}.
	\item An object $Z$ is $F$-colocal if and only if ${\bf C}(Z, X)\cong {\bf C}(Z,Y)$ for all $F$-colocal equivalences $X\to Y$, and also, a morphism $X\to Y$ is an $F$-colocal equivalence if and only if ${\bf C} (Z,X)\cong {\bf C}(Z,Y)$ of all $F$-colocal objects $Z$. In particular, $\eta: FX\to X$ is an $F$-colocal equivalence, and $FX$ is $F$-colocal.
	\item An inclusion ${\bf C}_1 \subseteq {\bf C}_2$ of two co-reflective subcategories induces a natural transformation $\tau: F_1 \to F_2$ between the corresponding co-reflections, and viceversa. Furthermore, $\tau$ is unique up to natural equivalence.
	\end{enumerate}
}

It is well-known that co-reflections preserve finite products in the categories of modules and spaces. Next we provide an analogous statement for arbitrary groups, as we are not aware of a proof of it in the literature (see \cite[Lemma 2.3]{FGS07} for a concrete instance).

\begin{proposition}
\label{coreflection-product}
Let $(F,\mu)$ with $F:Groups \to \mathcal{C}$ a co-reflection from the category of groups. Then, for every pair of groups $G$ and $H$, $F(G\times H)\cong FG\times FH$.
\end{proposition}
\begin{proof}
By Statement 3) above,   $\mu_G\times \mu_H: FG \times FH \to G\times H$ is an $F$-colocal equivalence, as $$\Hom(Z, FG\times FH) \simeq\Hom(Z,FG)\times \Hom(Z,FH)\simeq\Hom(Z,G)\times \Hom(Z,H)\simeq \Hom(Z, G\times H),$$
for all $F$-colocal groups $Z$, and the composition is induced by $\mu_G\times \mu_H$.
To prove that $FG\times FH$ is $F$-colocal, set an ordering $FH=\{x_i: i<\alpha\}$ for some ordinal~$\alpha$, and consider the linear sequence
$$
FG * FH \stackrel{f_0}{\longrightarrow} FG* FH \stackrel{f_1}{\longrightarrow}
\cdots
\longrightarrow FG* FH \stackrel{f_i}{\longrightarrow}
FG* FH \stackrel{f_{i+1}}{\longrightarrow}
\cdots$$
where $f_i(g)=x_i g x_i^{-1}$ for $g\in FG$, and $f_i(h)=h$, for $h\in FH$.
The colimit is isomorphic to the product $FG\times FH$. Thus $FG\times FH$ is a direct limit of of $F$-colocal groups, and hence $F$-colocal. Uniqueness, up to isomorphism, yields $F(G\times H)\cong FG\times FH$.
\end{proof}

This property will be useful in particular in Example~\ref{alldiferent}.
\subsection{Group-theoretic constructions}
The following are well-known examples of group-theoretic constructions. As we shall see in Section \ref{closedclasses}, they give rise to different co-reflections over interesting (co-reflective) subcategories of the category of groups.

\begin{definition}
{\rm
Let $A$ and $H$ be groups.

\begin{enumerate}
\item The $A$-\emph{socle} $S_A H$ is the subgroup of $H$ generated by the images of all the homomorphisms $A\rightarrow H$.


\item The $A$-\emph{radical} $T_A H$ of $H$ is the normal subgroup of $H$ characterized by the following property:  $H/T_A H$ is the largest quotient of $H$ such that $\Hom(A,H/T_A H)=0$.
Groups $G$ that satisfy $\Hom(A,G)=0$ are called {\it $A$-reduced} (or {\it $A$-null}). The projection $\varphi: H\epi H/T_AH$ is called {\it $A$-reduction} (or {\it $A$-nullification}), where $H/T_AH$ is $A$-reduced and $\varphi$  induces a bijection $\Hom(H/T_AH,G)\simeq \Hom(H,G)$, for all $A$-reduced groups $G$.
\item The $A$-\emph{cellular cover} of $H$ is the unique group $C_A H\in \C(A)$, up to isomorphism, that can be constructed from copies of $A$ by iterated direct limits, and such that there exists a homomorphism $C_A H\rightarrow H$ which induces a bijection $\Hom(A,C_A H)\simeq \Hom(A,H)$ (an \emph{A-equivalence}). Observe that this is equivalent to $\Hom(G,C_A H)\simeq \Hom(G,H)$ being a bijection for every $G\in \C(A)$.
\end{enumerate}
}
\end{definition}

\begin{remark}
{\rm
Note that similar definitions are possible if we change $A$ by a collection $\mathcal{A}$ of groups indexed over a certain set $I$. In this case, the generator would be the free product of isomorphism classes of elements of $\mathcal{A}$. Moreover, it is possible to give versions of the socle and the radical relative to a class of epimorphisms of groups. See \cite{Adamek} for a general treatment of these topics. On the other hand, $A$-reduction is a particular example of localization functor $L_f$, with respect to the constant homomorphism $A\rightarrow \{1\}$ (see for example \cite{Cas00} for background on localization of groups and spaces).
}
\end{remark}


 For the convenience of the reader, we recall more constructive versions of the radical and the cellular cover. The  $A$-radical $T_A H$ is the union
$T_{A}(H)= \bigcup_i T^i(H)$ of a chain starting at
$T^0(H)=S_{A}(H)$ and defined inductively by $T^{i+1}(H)/T^i(H)=S_{A}(H/T^i(H))$,
and $T^\lambda(H)=\bigcup_\alpha  T^\alpha(H)$ if $\lambda$ is a limit ordinal. This process yields an $A$-reduced group $H/T_AH$. Furthermore, if $\varphi: H\to G$
is any homomorphism with $G$ $A$-reduced, then it factors through $H\to H/T_AH \to G$. To see this, observe that if
$S_AH \to H\to G$ is trivial, $\varphi$ factors through $H\to H/S_A(H)\to G$. Inductively, if $i+1$ is a successor ordinal, since $S_A(H/T^i H)\to H\to G$ trivial, then $\varphi$ factors through $H/T^{i+1}H\to G$. If $\lambda$ is an ordinal limit, $H/T^\alpha H \to G$ for $\alpha< \lambda$, and this yields $H/T^\lambda H \to G$.

In turn, the $A$-cellular cover $C_{A}H$ is constructed in \cite[Theorem 1]{CDFS08} in the following explicit way. Let $A$ and $H$ be two groups, $C^H_0= \star_{A\to H}H$ the free product indexed over all homomorphisms $A\rightarrow H$, $ev: C^H_0\to H$ the evaluation homomorphism, and $K^H_0$ the kernel of the evaluation. Then,
$$
C^H_A(H) \cong (C^H_0/[C^H_0,K^H_0]) / T_A(K^H_0/[C^H_0,K^H_0]).
$$

An inductive description of $C_{A}H$ can be found in Section 3 of \cite{RS01}.

Next we will check that the previous constructions are functorial, and then they provide an important source of co-reflections (see Section~4):


\begin{proposition}

Given a group $A$, the $A$-socle, the $A$-radical and the $A$-cellular cover define functors in the category of groups.

\end{proposition}

\begin{proof}
Consider a group homomorphism $f:G\rightarrow H$. To check functoriality we should verify that $f$ restricts to the corresponding $A$-socles, $f:S_AG \to S_AH$. Let $x$ be an element of $S_AG$. Then there exist a finite set of homomorphisms $\phi_i:A\rightarrow G$, $i\leq n$, and elements $x_i\in A$ such that $x=\phi_1(x_1)\ldots \phi_n(x_n)$. Hence, $f(x)=(f\circ\phi_1(x_1))\ldots (f\circ\phi_1(x_n))$, which implies $f(x)\in S_AH$.

By induction along the increasing chain $T_{A} H= \bigcup_i T^i H$, one can prove functoriality of all $T^i$ and hence $T_A$. Let $f:G\to H$ be a group homomorphism. If $i+1$ is a successor ordinal, and assuming that $f$ restricts to $T^i H \to T^i G$, it is obtained that $f$ induces a homomorphism on the quotients $ H/T^iH \to G/T^iG$. Now the functoriality of $S_G$ yields the restriction $S_A(H/T^i H)\to S_A(G/T^i G)$, and therefore to $T^{i+1}H\to T^{i+1}G$.
For a limit ordinal $\lambda$, if $f$ restricts to $T^\alpha H\to T^\alpha G$ for each $\alpha<\lambda$, then is can be extended to $T^\lambda H\to T^\lambda G$.
Note that the previous argument also yields the functoriality of the quotients $H\to H/T^iH$ and hence that of the $A$-reduction homomorphism $H\to H/T_AH$.

To check functoriality of the $A$-cellular cover, consider a homomorphism $f:H\rightarrow H'$. Following the notation of the previous construction of $C_AH$, $f$ induces a homomorphism $\tilde{f}:C_0^H\rightarrow C_0^{H'}$ and a commutative diagram

$$\xymatrix{K_0^H \ar[r] \ar[d] & C_0^H \ar[r] \ar[d]^{\tilde{f}}  & H \ar[d] \\
K_0^{H'} \ar[r] & C_0^{H'} \ar[r]  & H' }
$$ whose rows are group extensions. Now, as $\tilde{f}(K_0^H )\subseteq K_0^{H'}$, it is clear that $\tilde{f}$ (and then $f$) induce a homomorphism $C^H_0/[C^H_0,K^H_0]\rightarrow C^{H'}_0/[C^{'H}_0,K^{H'}_0]$. Now the result follows from the functoriality of the radical, taking account of the previous description of $C_A(H)$.
\end{proof}

\begin{remark}
{\rm
  A thorough analysis of cellular covers and its functorial behaviour is undertaken in Section 2 of \cite{BCFS13}.
}
\end{remark}

 Concerning cellular covers, a different characterization was obtained by J. Scherer and the second author:

\begin{theorem}[\cite{RS01}, Theorem 3.7]
\label{CAH} Given groups $A$ and $H$, there is a central extension
\begin{equation}
\label{KCS}
K \hookrightarrow C_A(H) \twoheadrightarrow S_A(H),
\end{equation}
such that $\Hom(A_{\rm ab},K)=0$ and the natural map $\Hom(A,H)\to H^2(A;K)$ is
trivial. Moreover, this extension is initial with respect to all the extensions with these two properties.
\end{theorem}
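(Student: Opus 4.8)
The plan is to read off the whole statement from the defining property of the $A$-cellular cover, namely that the structural homomorphism $\eta\colon C_A(H)\to H$ induces a \emph{bijection} $\eta_*\colon\Hom(A,C_A H)\xrightarrow{\sim}\Hom(A,H)$. Two elementary facts about $\C(A)$ will be used repeatedly. First, every $L\in\C(A)$ is generated by the images of the homomorphisms $A\to L$: the class of groups with this property contains $A$ and is stable under the colimits (direct limits, free products) that build $\C(A)$. Second, for an abelian group $M$ the class of groups $L$ with $\Hom(L,M)=0$ contains $A$ whenever $\Hom(A_{ab},M)=0$, and it is stable under those same colimits because $\Hom(-,M)$ turns them into inverse limits; hence it contains all of $\C(A)$ together with all free products of copies of $A$.

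I would first produce the extension \eqref{KCS}. Since $C_A H\in\C(A)$, its image under $\eta$ is generated by the images of the composites $\eta\alpha$ for $\alpha\colon A\to C_A H$; as $\eta_*$ is onto these run over all of $\Hom(A,H)$, so $\eta(C_A H)=S_A(H)$ and $\eta$ factors as $C_A H\twoheadrightarrow S_A(H)\hookrightarrow H$. Put $K=\Ker\eta$. Given $k\in K$ and $\alpha\colon A\to C_A H$, conjugation by $k$ fixes $\eta\alpha$ because $\eta(k)=1$, hence fixes $\alpha$ by injectivity of $\eta_*$; so $k$ centralises $\im\alpha$, and since these images generate $C_A H$ we obtain $K\le Z(C_A H)$, i.e.\ the extension is central. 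In particular $K$ is abelian, so $\Hom(A_{ab},K)=\Hom(A,K)$; any $\beta\colon A\to K\hookrightarrow C_A H$ satisfies $\eta\beta=1$ and hence $\beta=1$, so $\Hom(A_{ab},K)=0$. Finally, the natural map sends $f\in\Hom(A,H)$ — which has image in $S_A H$ and therefore factors through $S_A H$ — to the class of the pull-back of \eqref{KCS} along $A\to S_A H$, and the unique $\tilde f\colon A\to C_A H$ with $\eta\tilde f=f$ is a section of that pull-back over $A$; so the class vanishes.

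For the universality I would show that \eqref{KCS} is initial among central extensions of $S_A(H)$ with these properties (which is the sense in which it is universal). So let $K'\rightarrowtail E\xrightarrow{\,q\,}S_A(H)$ be a central extension with $\Hom(A_{ab},K')=0$ and with the composite $\Hom(A,H)\to H^2(A;K')$ trivial; the claim is that there is a unique $\psi\colon C_A H\to E$ with $q\psi$ the projection $C_A H\to S_A(H)$. Pull $E$ back along that projection to a central extension $K'\rightarrowtail\overline{E}\twoheadrightarrow C_A H$. For every $\alpha\colon A\to C_A H$ the restriction $\alpha^{*}\overline{E}$ is the pull-back of $E$ along the homomorphism $A\to S_A H$ defined by $\eta\alpha$, hence trivial in $H^2(A;K')$ by hypothesis; thus $\alpha^{*}\overline{E}$ is split for every $\alpha$. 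Since $C_A H$ is built from copies of $A$ by forming free products and coequalizers, it now suffices to split $\overline{E}$ over a free product $P$ of copies of $A$ that maps onto $C_A H$: over each copy the section exists by the above, the universal property of free products assembles these into a section over $P$, and it is unique because $\Hom(P,K')=0$. That uniqueness forces the section to be compatible with the coequalizer relations, so it factors through the coequalizer to give a section $C_A H\to\overline{E}$, necessarily unique since $\Hom(C_A H,K')=0$. Composing it with $\overline{E}\to E$ gives $\psi$, and uniqueness of $\psi$ is uniqueness of the section.

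The step I expect to be the real work is the last one: running this section-gluing argument faithfully along the actual construction of $C_A H$ out of copies of $A$ (a coequalizer of free products, possibly iterated) — checking at each stage that $\overline{E}$ still splits over every newly introduced copy of $A$, and that the partial sections accumulated so far glue across the free products and coequalizers being used. The vanishing of $\Hom(-,K')$ on $\C(A)$ and on free products of copies of $A$ is precisely what makes those partial sections unique, hence mutually compatible. In contrast, centrality of $K$, the vanishing $\Hom(A_{ab},K)=0$, and the triviality of $\Hom(A,H)\to H^2(A;K)$ fall out formally from the bijectivity of $\eta_*$.
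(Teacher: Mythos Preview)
The paper does not prove this statement; it is quoted from \cite{RS01} (their Theorem~3.7) and used as a black box. There is therefore no proof in the present paper to compare your argument against.

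That said, your argument is correct. The construction of the extension, its centrality, the vanishing of $\Hom(A_{ab},K)$, and the triviality of $\Hom(A,H)\to H^2(A;K)$ all follow cleanly from the bijectivity of $\eta_*$, exactly as you say.

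For the universality, your construction-based approach works, but the gluing step you flag as ``the real work'' can be bypassed entirely, using only the defining bijection and the colimit stability you already stated. Once you have pulled $E$ back to the central extension $K'\hookrightarrow\overline{E}\twoheadrightarrow C_A H$, observe that the projection $\overline{E}\to C_A H$ itself induces a \emph{bijection}
\[
\Hom(A,\overline{E})\longrightarrow\Hom(A,C_A H):
\]
surjectivity is precisely your splitting over each $\alpha$ (from the $H^2$ hypothesis), and injectivity is $\Hom(A,K')=0$. The class of groups $L$ for which $\Hom(L,\overline{E})\to\Hom(L,C_A H)$ is bijective contains $A$ and is closed under direct limits (inverse limits of bijections of sets are bijections), hence contains all of $\C(A)$, in particular $C_A H$. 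The identity of $C_A H$ then has a unique preimage $\sigma\colon C_A H\to\overline{E}$, which is by construction a section; composing with $\overline{E}\to E$ gives the required $\psi$, and uniqueness is immediate. This avoids any appeal to the explicit transfinite presentation of $C_A H$ and is the formulation one would expect from the co-reflection viewpoint of \cite{RS01}.
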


\subsection{Covers of spaces}

We will also need to review some constructions in the category of \emph{spaces}. Analogously to groups, given a pointed space $M$, the \emph{cellular class} $\mathcal{C}(M)$ generated by $M$ is the smallest class that contains $M$ and is closed under homotopy equivalences and pointed homotopy colimits. The corresponding augmentation $CW_M X\to X$ for a pointed space $X$, called the $M$-\emph{cellular cover} of $X$, was explicitly described by Chach\'olski in his thesis \cite{Cha96} in the following way: consider the homotopy cofiber sequence
\begin{equation}
    \label{cof1}
    \bigvee_{[M,X]_*} M\stackrel{ev}{\longrightarrow} X \to \Cof_MX
\end{equation}
where $ev$ is the evaluation map.
Denote by $P_M$ the $M$-nullification functor, i.e. the homotopical localization with respect to the constant map $M\rightarrow \ast$ (see \cite{F97}, chapter 1). Then for any connected space $X$, $CW_MX$ fits into the following homotopy fiber sequence
\begin{equation}
    \label{fib1}
    CW_MX\rightarrow X \rightarrow P_{\Sigma M}(\Cof_M X).
\end{equation}
In \cite[Theorem 3.3]{RS01} the relation between cellular covers of groups and spaces was described: if $M$ is a 2-dimensional complex with $\pi_1M=A$, and $X$ is an Eilenberg\-Mac Lane space  $K(H,1)$ then $\pi_1CW_MX=C_AH$ for every group $H$.

\section{Generating classes of groups}
\label{generating}

 Given a group $A$, the class $\overline{\mathcal{C}(A)}$ is the smallest class that contains $A$ and is closed under direct limits and extensions. The elements of this class will be called $A$-\emph{acyclic} groups. The main goal of this section will be to show that such a class is always singly-generated using only direct limits, or in other words, it possesses a \emph{cellular generator}:

\begin{definition}
\rm{ Given a class $\mathcal{C}$ of groups closed under direct limits, a \emph{cellular generator} for the class $\mathcal{C}$ is a group $B$ such that $\mathcal{C}=\mathcal{C}(B)$.

}\end{definition}

In our context, $\mathcal{C}$ will usually be the class $\overline{\mathcal{C}(A)}$ for some group $A$. These cellular generators are not unique, and in fact several models for particular instances of $A$ have been proposed in the literature (\cite{BC99}, \cite{RS01}). We now present a general construction, which is inspired in the methodology of Chach\'olski-Parent-Stanley \cite{CP04}, that solved successfully the same problem in the category of topological spaces. Concretely, we adopt the group-theoretic context their idea of small space and follow the general scheme of their proof; however, it should be remarked that the translation is highly non-trivial, as their methods are homotopical in nature and rely in the simplicial structure of space, and we deal with a complete different context.

 We start with a set-theoretic definition. As usual, we denote the cardinal of a set $X$ by $|X|$.




\begin{definition}
\rm{For a given infinite regular uncountable cardinal $\lambda$ a group $H$ is called {\it $\lambda$-small}
(or small with respect to $\lambda$) if $|H| < \lambda$.}
\end{definition}

In the sequel, unless explicit mention against, we will assume that $\lambda$ is regular infinite.
\vspace{.5cm}
Next we show the easiest example of $\lambda$-smallness.

\begin{example}
{\rm For $\lambda=\aleph_1$, $\lambda$-small means finite or countably infinite.}
\end{example}

Using the previous definition, we can define the class $\C[\lambda]$;

\begin{definition}
	{\rm
Given a class $\C$ of groups closed under direct limits,
we denote by $\C[\lambda]$ the subclass of $\C$ defined by the following property: $H$ belongs to $\C[\lambda]$
if for each $\lambda$-small group $K$, every group homomorphism $K \to H$
factors as $K \to C\to H$ for some $\lambda$-small group $C$ in $\C$.
}
\end{definition}

Note that the definition of $\C[\lambda]$ only depends on $\C$ and the cardinal $\lambda$. We remark that the group $C$ depends on the homomorphism $K\rightarrow H$ (and in particular depends on $K$ and $H$).

The following characterization says that we can restrict ourselves to subgroups of $H$.

\begin{lemma}
Let $\lambda$ be an infinite regular cardinal. Then a group $H$ is in $\C[\lambda]$ if and only for each $\lambda$-small subgroup $S<H$, there exists a $\lambda$-small subgroup $C$ in $\C$, such that $S<C<H$.
\end{lemma}
\begin{proof}
Let $K \to H$ be any group homomorphism, with $K$ $\lambda$-small. The image $\varphi(K)$ is $\lambda$-small subgroup of $H$. Assuming the necessary condition, there exists a $\lambda$-small subgroup $C\in \C$ such that $\varphi(K)<C<H$. Hence, $K\to \varphi(K)\to C \to H$ is the desired factorization.

For the converse, suppose that $S$ is any $\lambda$-small subgroup of $H$, and let  $S \to C_0 \stackrel{\phi_0}{\to} H$ be a factorization of the inclusion $S\hookrightarrow H$, where $C_0$ is $\lambda$-small and belongs to $\C$. Then $\phi_0(C_0)<H$ is $\lambda$-small, and $S\leq \phi_0(C_0)$. Denote $K_0=\phi_0(C_0)$. If $S=K_0$, then $S$ is a retract of $C_0$, and hence it belongs to $\C$. Therefore, we can take $C=S$ and we are done. If $S<K_0$, we can proceed in the same way with the inclusion $K_0 <H$, obtaining a factorization $K_0\to C_1 \stackrel{\phi_1}{\to} H$. Iterating the process if necessary, we obtain a string of homomorphisms such that for every $i$ there is a strict inclusion $K_i\lneq K_{i+1}$:
$$S\hookrightarrow C_0\twoheadrightarrow K_0\hookrightarrow C_1\twoheadrightarrow K_1\hookrightarrow C_2\twoheadrightarrow K_2\hookrightarrow\ldots,$$
where $C_i$ is $\lambda$-small in $\C$ for every $i$.
Passing to the countable limit we get that the composition $$\colim_{i<\omega} K_i \to \colim_{i<\omega} C_{i+1} \to \colim_{i<\omega} K_{i+1},$$ is the identity, so $\colim K_i$ is a retract of $\colim C_i$, and thus belongs to $\C$. Thus, $C=\colim K_i$ is the desired subgroup of $H$.
\end{proof}

The following proposition determines the scope of the class $\C[\lambda]$.

\begin{proposition}
\label{lambda-closed}
Let $\lambda$ be an infinite regular cardinal. A group $H$ is in $\C[\lambda]$ if and only if
it is isomorphic to a direct limit of $\lambda$-small groups in $\C$.
\end{proposition}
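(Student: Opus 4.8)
The plan is to prove the two implications separately, using throughout that, since $\lambda$ is regular, ``$\lambda$-small'' just means ``of cardinality $<\lambda$'', and that a union, a free product, or a quotient of fewer than $\lambda$ many $\lambda$-small groups is again $\lambda$-small; this elementary cardinal arithmetic, resting on the regularity of $\lambda$, will be used without comment.

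For the implication $(\Leftarrow)$, suppose $H\cong\colim_{i\in I}H_i$ with $I$ directed and each $H_i$ $\lambda$-small and in $\C$. Then $H\in\C$ because $\C$ is closed under direct limits, so only the factorization property remains. I would first reindex the system over a $\lambda$-directed poset: let $\mathbf{J}$ be the poset of directed subsets $S\subseteq I$ with $\#S<\lambda$, ordered by inclusion. Since the directed closure in $I$ of a set of size $<\lambda$ again has size $<\lambda$, the poset $\mathbf{J}$ is $\lambda$-directed, its members cover $I$, and hence $H\cong\colim_{S\in\mathbf{J}}H_S$ with $H_S:=\colim_{i\in S}H_i$ again $\lambda$-small and in $\C$. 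Then, given a $\lambda$-small $K$ and $f\colon K\to H$, I would present $K$ by $<\lambda$ generators and $<\lambda$ relations, lift each generator through some $H_S$, and choose for each relation an index at which the corresponding word already vanishes; $\lambda$-directedness of $\mathbf{J}$ then provides a single $S^\ast$ above all these ($<\lambda$ many) indices, and the lifts assemble into a homomorphism $K\to H_{S^\ast}$ whose composite with the structure map $H_{S^\ast}\to H$ recovers $f$. This is the required factorization through a $\lambda$-small group in $\C$, so $H\in\C[\lambda]$.

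For $(\Rightarrow)$, suppose $H\in\C[\lambda]$. For each $\lambda$-small subgroup $K\le H$ fix, using the hypothesis applied to the inclusion $K\hookrightarrow H$, a factorization $K\xrightarrow{\iota_K}C_K\xrightarrow{p_K}H$ through a $\lambda$-small $C_K\in\C$; note $\iota_K$ is automatically injective, its composite with $p_K$ being injective. I would then consider the essentially small slice category $\mathbf{I}$ whose objects are pairs $(C,p)$ with $C$ $\lambda$-small in $\C$ and $p\colon C\to H$ a homomorphism, and whose morphisms are maps over $H$, and argue that $\mathbf{I}$ is filtered: it is nonempty (it contains $(C_{\{1\}},p_{\{1\}})$); two objects $(C_1,p_1),(C_2,p_2)$ admit a common cocone into $(C_K,p_K)$, where $K\le H$ is the $\lambda$-small subgroup generated by $p_1(C_1)\cup p_2(C_2)$, via $C_i\xrightarrow{p_i}p_i(C_i)\hookrightarrow K\xrightarrow{\iota_K}C_K$; and a parallel pair $f,g\colon(C,p)\rightrightarrows(C',p')$ is coequalized by the morphism $C'\xrightarrow{p'}p'(C')\xrightarrow{\iota_{p'(C')}}C_{p'(C')}$, since $p'f=p=p'g$ forces the two composites to coincide. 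Next I would check that the canonical map $\theta\colon\colim_{\mathbf{I}}C\to H$ is an isomorphism: surjective because each $h\in H$ lies in the image of $p_{\langle h\rangle}$, hence of $\theta$; injective because the colimit is filtered, so a kernel element is represented by some $c\in C$ with $p(c)=1$, and the morphism $C\xrightarrow{p}p(C)\xrightarrow{\iota_{p(C)}}C_{p(C)}$ sends $c$ to $1$, killing its class. Finally, recomputing the filtered colimit $H\cong\colim_{\mathbf{I}}C$ over a cofinal directed poset exhibits $H$ as a direct limit of $\lambda$-small groups in $\C$.

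The hard part will be the verification that $\mathbf{I}$ is filtered: one cannot simply amalgamate two objects of $\mathbf{I}$ inside $\C$, since $\C$ is not assumed closed under free products or pushouts, and it is precisely the defining property of $\C[\lambda]$ — routing any $\lambda$-small subgroup of $H$ through a $\lambda$-small object of $\C$ — that supplies the amalgamations and the coequalizers. Everything else reduces to routine cardinal arithmetic (where the regularity of $\lambda$ enters) and the standard fact that a filtered colimit can be recomputed over a directed poset.
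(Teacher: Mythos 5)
Your proof takes a genuinely different and in several respects more careful route than the paper's (a canonical filtered diagram of small $\C$-objects over $H$ for the forward direction, a $\lambda$-directed reindexing for the converse), but it has a real gap concentrated at $\lambda=\aleph_0$ — a case the paper actually uses (Example~\ref{concrete} takes $\lambda=\aleph_0$ for $A=\mathbb{Z}/p$). In your verification that $\mathbf{I}$ is filtered, the amalgamation of $(C_1,p_1)$ and $(C_2,p_2)$ goes through the subgroup $K=\langle p_1(C_1)\cup p_2(C_2)\rangle\le H$, which you assert is $\lambda$-small. For uncountable regular $\lambda$ this is fine, since a subgroup generated by fewer than $\lambda$ elements has cardinality $<\lambda$; but for $\lambda=\aleph_0$, ``$\lambda$-small'' means finite, and two finite subgroups can generate an infinite subgroup. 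This is not a repairable technicality: take $\C=\C(\mathbb{Z}/2)$ and $H=\mathbb{Z}/2\ast\mathbb{Z}/2$, the infinite dihedral group. Then $H\in\C[\aleph_0]$ (every finite subgroup of $H$ has order at most $2$, so every homomorphism from a finite group factors through a copy of $\mathbb{Z}/2$ inside $H$), yet $H$ is \emph{not} a colimit over a directed poset of finite groups, because such a colimit is locally finite while $H$ contains $\mathbb{Z}$. So $\mathbf{I}$ is genuinely not filtered here, and the statement you prove — with ``direct limit'' read as ``colimit over a directed poset'' — is false for $\lambda=\aleph_0$.

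The resolution is that ``direct limit'' in this paper means arbitrary colimit: the paper's own proof of the converse decomposes a direct limit into filtered colimits and pushouts and treats the pushout case separately (via the argument of Lemma~4.8.4 of \cite{CP04}). Under that reading your converse acquires a matching gap, since you only treat colimits indexed by a directed poset; the reindexing over $\mathbf{J}$ says nothing about, e.g., a pushout of $\lambda$-small groups in $\C$. For uncountable $\lambda$ both gaps close easily — a colimit of fewer than $\lambda$ many $\lambda$-small groups in $\C$ is again $\lambda$-small and in $\C$, so one can reindex an arbitrary colimit over the $\lambda$-directed poset of subdiagrams of size $<\lambda$ and then run your arguments verbatim — and in that regime your forward direction is actually tighter than the paper's (it yields honest filteredness, where the paper's poset of $\lambda$-small subgroups of $H$ lying in $\C$ need not be directed). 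But as written the proof does not cover $\lambda=\aleph_0$, where non-filtered colimits must genuinely appear in the conclusion.
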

\begin{proof} By the previous proposition, for each $\lambda$-small subgroup $S$ of $H$, there exists a $\lambda$-small subgroup in $\C$ such that  $S<C<H$. Hence, $H$ is isomorphic to the direct system of all $\lambda$-small subgroups of $H$ that belong to $\mathcal{C}$.

 Conversely, let $H$ be a direct limit of $\lambda$-small groups in $\mathcal{C}$, and let us prove that $H$ belongs to $\mathcal{C}[\lambda]$. Following (\cite{Cha94}, 4.3), and taking into account that $\pi_1$ commutes with colimits, every direct limit of groups can be obtained as an iteration of limits over linear categories (chains) and push-outs, so we need to check the statement in these two cases. Observe that $H$ belongs to~$\mathcal{C}$.

    Let $H$ be the direct limit of the $\lambda$-small groups $\{G_{\alpha}\}$ over a linear category $\mathcal{I}$, i.e. $G_1\rightarrow G_2\rightarrow \ldots \rightarrow H$, $\kappa$-indexed for a certain ordinal $\kappa$. As $H$ is a regular cardinal, it makes sense to distinguish between the following two cases. If $\mathcal{I}$ is $\lambda$-filtered, a standard compactness argument states that every homomorphism $K\rightarrow H$ from a $\lambda$-small group factors through $G_\alpha$ for a certain $\alpha$. On the other hand, if $\mathcal{I}$ is not $\lambda$-filtered, this means that the cardinality of $\kappa$ is smaller than $\lambda$, and then the diagram is $\lambda$-small. As $H$ is a quotient of the free product of the groups $G_i$ indexed over a $\lambda$-small cardinal, the definition of regularity implies that $H$ is $\lambda$-small (this is also a consequence of Proposition 1.16 in \cite{Adamek}). Hence, as $H$ is also in $\mathcal{C}$, the factorization of every homomorphism $K\rightarrow H$ through the identity of $H$ proves that $H$ belongs to $\mathcal{C}[\lambda]$.

Now if $H$ is a push-out of a diagram of groups $C\leftarrow A\rightarrow B$ with $A$, $B$ and $C$ $\lambda$-small groups in $\mathcal{C}$, $H$ is in particular a quotient of the free product $B\ast C$, thus $|H|\leq |B\ast C|$. If $B$ and $C$ are finite, $|B\ast C|=\aleph_0< \lambda$, and if not, $|B\ast C|=\textrm{max } (|B|,|C|)<\lambda$, because $|B|$ and $|C|$ are $\lambda$-small. So we obtain the $H$ is $\lambda$-small. As $H$ also belongs to $\mathcal{C}$, we are done.
\end{proof}

Now we can establish closure properties of $\mathcal{C}[\lambda]$. The first is a straightforward consequence of the previous result:

\begin{corollary}

If $\mathcal{C}$ is closed under direct limits, $\mathcal{C}[\lambda]$ is so for every infinite regular cardinal $\lambda$.

\end{corollary}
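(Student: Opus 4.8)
The plan is to deduce the statement directly from Proposition~\ref{lambda-closed}, which characterizes membership in $\mathcal{C}[\lambda]$ as ``being isomorphic to a direct limit of $\lambda$-small groups in $\mathcal{C}$.'' Since $\mathcal{C}$ is assumed closed under direct limits, the $\lambda$-small groups in $\mathcal{C}$ are simply the $\lambda$-small groups that lie in $\mathcal{C}$, and the whole point is that $\mathcal{C}[\lambda]$ is exactly the closure of this subcollection under direct limits. Closures under an operation are automatically closed under that operation, so the result should be essentially immediate once the iterated-limit bookkeeping is set up carefully.

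First I would take a family $\{H_i\}_{i\in I}$ of groups in $\mathcal{C}[\lambda]$ indexed by a directed (or more generally filtered) category $I$, and let $H=\colim_i H_i$. By Proposition~\ref{lambda-closed}, each $H_i$ is a direct limit $H_i \cong \colim_{j\in J_i} G_{i,j}$ of $\lambda$-small groups $G_{i,j}\in\mathcal{C}$. The key step is then to observe that a direct limit of direct limits is again a direct limit: one assembles the doubly-indexed system $\{G_{i,j}\}$ over the appropriate Grothendieck-type composite category and checks that its colimit is $H$. This is a standard fact about colimits commuting with colimits (equivalently, that the class of groups expressible as a direct limit of objects from a fixed collection $\mathbf{D}$ is itself closed under direct limits). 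Having exhibited $H$ as a direct limit of the $\lambda$-small groups $G_{i,j}\in\mathcal{C}$, Proposition~\ref{lambda-closed} applied in the reverse direction gives $H\in\mathcal{C}[\lambda]$.

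The only point requiring mild care is that ``direct limit'' here, as used earlier in the excerpt, means an arbitrary colimit over a filtered category — or, as the proof of Proposition~\ref{lambda-closed} notes, any composition of filtered colimits and pushouts. So I would either argue at the level of filtered colimits (where the interchange of colimits is transparent) or, if pushouts must be treated separately, note that a pushout of objects each of which is a direct limit of $\lambda$-small groups in $\mathcal{C}$ is again such a direct limit, by first refining each leg of the pushout diagram. In either case no new idea beyond Proposition~\ref{lambda-closed} is needed.

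The main (and really only) obstacle is purely organizational: making the ``colimit of colimits is a colimit'' step precise without drowning in indexing. Since the corollary is flagged in the text as ``a straightforward consequence of the previous result,'' I would keep the proof to a couple of sentences, citing Proposition~\ref{lambda-closed} twice and invoking the stability of the class of direct limits of a fixed collection under direct limits, rather than writing out the composite index category in detail.
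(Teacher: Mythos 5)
Your proposal is correct and follows essentially the same route as the paper: apply Proposition~\ref{lambda-closed} in one direction to write each group of $\mathcal{C}[\lambda]$ as a direct limit of $\lambda$-small groups in $\mathcal{C}$, observe that a direct limit of direct limits is again a direct limit, and apply the proposition in the reverse direction. The paper's own proof is just a two-sentence version of this, leaving the colimit-of-colimits bookkeeping implicit, so your extra care there is fine but not a divergence.
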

\begin{proof} Let $H$ be a direct limit of groups in $\mathcal{C}[\lambda]$. By the previous proposition, $H$ is a direct limit of $\lambda$-small groups in $\mathcal{C}$, and, every direct limit of $\lambda$-small groups in $\mathcal{C}$ belongs to $\mathcal{C}[\lambda]$, so we are done.
\end{proof}

Observe that the proof establishes that the $\lambda$-small groups in $\mathcal{C}$ generate $\mathcal{C}[\lambda]$ under colimits.

The following result is the group-theoretical analogue of \cite[Theorem 6.1]{CP04}, although the proof is very different and purely group-theoretical.

\begin{proposition}
\label{lambda-extensions}
If $\C$ is any class of groups closed under direct limits and extensions,
then $\C[\lambda]$ is also closed under extensions.
\end{proposition}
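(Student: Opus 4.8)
The plan is to reduce to a claim about subgroups of $G$ by unwinding the definition of $\C[\lambda]$, and then to produce the required $\lambda$-small member of $\C$ inside $G$ by a countable back-and-forth between the kernel and the quotient of the extension. So let $N\hookrightarrow G\stackrel{p}{\twoheadrightarrow}Q$ be an extension with $N,Q\in\C[\lambda]$. Since the image of a homomorphism out of a $\lambda$-small group is again $\lambda$-small, in order to conclude $G\in\C[\lambda]$ it is enough to show that every $\lambda$-small subgroup $K\le G$ is contained in some $\lambda$-small subgroup $L\le G$ with $L\in\C$: a homomorphism $f\colon K'\to G$ out of a $\lambda$-small $K'$ then factors as $K'\twoheadrightarrow f(K')\hookrightarrow L\hookrightarrow G$, taking for $L$ the subgroup associated with $K=f(K')$. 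The only thing I use about $N$ and $Q$ is the property established by the argument proving Proposition \ref{lambda-closed}: whenever $H\in\C[\lambda]$, every $\lambda$-small subgroup of $H$ lies in a $\lambda$-small subgroup of $H$ belonging to $\C$ (run the factorisation-and-retract loop of that proof starting from the given subgroup).

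Now fix a $\lambda$-small $K\le G$ and build an increasing chain $K=L_0\le L_1\le L_2\le\cdots$ of $\lambda$-small subgroups of $G$. First absorb the quotient: $p(K)$ is $\lambda$-small, so pick a $\lambda$-small subgroup $Q_0\le Q$ with $p(K)\le Q_0$ and $Q_0\in\C$, choose a preimage in $G$ of each element of $Q_0$, and let $L_1$ be the subgroup generated by $K$ together with those preimages; then $L_1$ is $\lambda$-small and $p(L_1)=Q_0$. For $n\ge 1$, given $L_n$ with $p(L_n)=Q_0$, the subgroup $L_n\cap N$ is $\lambda$-small, so pick a $\lambda$-small $N_n\le N$ with $L_n\cap N\le N_n$ and $N_n\in\C$, and set $L_{n+1}=\langle L_n,N_n\rangle$. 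Since $\lambda$ is regular, $L_{n+1}$ is again $\lambda$-small; since $N_n\le N=\Ker p$, one still has $p(L_{n+1})=Q_0$; and clearly $L_n\cap N\le N_n\le L_{n+1}\cap N$.

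Finally put $L=\bigcup_n L_n$. It is $\lambda$-small (a union of a countable increasing chain of $\lambda$-small groups, $\lambda$ being regular), it contains $K=L_0$, and $p(L)=Q_0\in\C$. The interleaving inequalities give $L\cap N=\bigcup_n(L_n\cap N)=\bigcup_n N_n$, a directed union of $\lambda$-small groups in $\C$, hence a member of $\C$ since $\C$ is closed under direct limits. Thus $L$ sits in an extension $L\cap N\hookrightarrow L\stackrel{p}{\twoheadrightarrow}Q_0$ whose kernel and quotient both lie in $\C$, and since $\C$ is closed under extensions, $L\in\C$, as required.

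I expect the real obstacle to be the construction of $L$ inside $\C$. One cannot produce it in a single step — say as the subgroup generated by $K$, a set of preimages of $Q_0$, and one fixed $N_0\supseteq K\cap N$ with $N_0\in\C$ — because enlarging a subgroup of $G$ enlarges its intersection with $N$, and this enlargement can escape $N_0$; the countable iteration is precisely what drives $L\cap N$ down to the direct limit $\bigcup_n N_n\in\C$ while freezing the image $p(L)$ at the single group $Q_0$. By contrast the cardinal bookkeeping (each $L_n$, and the union $L$, being $\lambda$-small) and the reduction to subgroups are routine, the former using only the regularity of $\lambda$.
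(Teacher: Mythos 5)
Your argument is correct for every \emph{uncountable} regular $\lambda$, and it takes a genuinely different route from the paper's. Both proofs share the same engine --- a countably interleaved tower in which one alternately repairs exactness and membership in $\C$, passes to the limit, and invokes closure of $\C$ under extensions only there --- but you run the tower \emph{inside} $G$, as an increasing chain of subgroups $L_n$ with $p(L_n)$ frozen at $Q_0$ and with $L_n\cap N$ squeezed between the $N_n\in\C$, whereas the paper runs it externally: it forms the subgroup generated by $j_0(K)$ and a set-theoretic section of $Z_0$, takes pushouts $P_i$ of $K_0\leftarrow K'\to X_i$, passes to normal closures, refactors those through new $X_{i+1}\in\C$, and obtains in the limit an extension $X_\infty\to P_\infty\to Z_0$ through which $K\to Y$ factors. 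Your internal version buys two things: it avoids the amalgamated pushouts entirely (whose $\lambda$-smallness is the most delicate point of the paper's proof), and it produces the factoring object as an honest subgroup of $G$. The price is that you need the stronger ``subgroup form'' of membership in $\C[\lambda]$ (every $\lambda$-small subgroup of $H\in\C[\lambda]$ lies in a $\lambda$-small subgroup of $H$ belonging to $\C$); your appeal to the retract loop in the proof of Proposition~\ref{lambda-closed} for this is legitimate, since that is exactly what the loop establishes.

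The one genuine caveat is the cardinal bookkeeping when $\lambda=\aleph_0$, where $\lambda$-small means finite: a subgroup of $G$ generated by finitely many elements need not be finite, so already $L_1$ (generated by $K$ and the chosen preimages of $Q_0$) can fail to be $\lambda$-small, and likewise the countable union $L=\bigcup_n L_n$ of finite groups need not be finite. Regularity of $\lambda$ gives what you claim only for uncountable $\lambda$, where $\#\langle S\rangle\le\max(\#S,\aleph_0)<\lambda$ and a countable increasing union of groups of size $<\lambda$ again has size $<\lambda$. This is not a defect you introduced --- the paper's own proof claims $\lambda$-smallness of $K_0$, of the pushouts $P_i$, and of the colimit $P_\infty$ on the same insufficient grounds, even though $\lambda=\aleph_0$ occurs in its applications (Example~\ref{concrete}) --- but you should either restrict the statement to uncountable regular $\lambda$ (which suffices for Theorem~\ref{maintheorem}, taking e.g.\ $\lambda=\aleph_1$ when $A$ is finite) or supply a separate argument in the finite case.
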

\begin{proof} The idea throughout the proof is to find $\lambda$-small groups that are in $\C$ from the starting short exact sequence. In the end, we will get into (non exact) sequences of small groups in $\C$, take normal closures to make them exact, and pass to the limit to get the desired factorization.
	
Let $1 \to X \to Y \stackrel{\pi}{\to} Z\to 1$ be a short exact sequence with
$X$ and $Z$ in $\C[\lambda]$. We have to prove that $Y\in \C[\lambda]$. Let $j: K \to Y$
be any group homomorphism, with $K$ $\lambda$-small.
Since $Z\in \C[\lambda]$, $\pi j$ factors through some $\lambda$-small group $Z_0$ in $\C$.
Taking the pullback along $Z_0 \to Z$ we obtain a short exact sequence
$1 \to X \cong \Ker \pi_0 \to Y_0 \stackrel{\pi_0}{\to} Z_0 \to 1$.
Let $j_0: K \to Y_0$ be the induced homomorphism and consider
 $K_0$ the subgroup of $Y_0$ generated by $j_0(K)$ and $s(Z_0)$
where $s$ is a chosen section (non necessarily homomorphism) of $\pi_0$.
Let $K'$ be the kernel of $K_0\to Z_0$. Clearly, $K'=K_0\cap X$ and it is $\lambda$-small because $K_0$ is $\lambda$-small.
Since $X\in \mathcal{C}[\lambda]$, the inclusion
$K'\hookrightarrow X$ factors as $K'\hookrightarrow X_0 \to X$ for
some small group $X_0 \in \mathcal{C}$.
$$
\begin{array}{ccccccc}
 &  & K' & \to & X_0 & \to & X\\
 & & \downarrow & & \downarrow && \downarrow\\
K & \to & K_0 & \to & Y_0 & \to & Y\\
 & & \downarrow &  & \downarrowright{\pi_0} && \downarrow\\
&  & Z_0& = & Z_0 & \to & Z
\end{array}
$$
Let now $P_0$ be the push-out of
$ K_0\leftarrow K' \to X_0$, and denote by $(X_0)^N$ the normal closure of $X_0$ in $P_0$. The quotient $P_0/(X_0)^N\cong Z_0$, and
$P_0$ is a $\lambda$-small group in $\mathcal{C}$, because both $K'$ and $X_0$ are $\lambda$-small. Observe that $(X_0)^N$
need not be in $\mathcal{C}$. In any case, it is $\lambda$-small and therefore the inclusion $(X_0)^N\to X$ factors through some $\lambda$-small group $X_1\in \mathcal{C}$. Iterating this process we get an increasing chain of short sequences over $X \to Y \to Z$ which are, alternatively, in $\mathcal{C}$ but not exact ($X_i\to P_i\to Z_0$), and exact but not in $\mathcal{C}$ ($X_i^N\to P_i\to Z_0$). Here the subscript ``$\infty$" denotes the (countable) colimit of each linear diagram of homomorphisms.
$$
\begin{array}{ccccccccccc}
X_0 & \to & (X_0)^N & \to & X_1 & \to ... \to & X_i & \to & (X_i)^N  &  \to ... \to & X_\infty=(X_\infty)^N\\
\downarrow & & \downarrow & & \downarrow && \downarrow && \downarrow && \downarrow\\
P_0 & \to & P_0 & \to & P_1 & \to ... \to & P_i & \to & P_i  & \to ... \to & P_\infty\\
\downarrow & & \downarrow & & \downarrow && \downarrow && \downarrow && \downarrow\\
Z_0 & \to & Z_0 & \to & Z_0 & \to ... \to & Z_0 & \to & Z_0  & \to ... \to & Z_0
\end{array}
$$
In the limit we get a short exact sequence $1\to X_\infty=(X_\infty)^N \to P_\infty \to Z_0 \to 1$, where $X_\infty$ is $\lambda$-small and is in $\mathcal{C}$. As the linear colimit is indexed over a countable ordinal and the cardinal of $P_i$ is $\lambda$-small for every $i$, regularity of $\lambda$ implies that $P_{\infty}$ is $\lambda$-small. Moreover, $P_{\infty}$ is also in $\mathcal{C}$, because $\mathcal{C}$ is closed under extensions. Hence, $K\to Y$ factors through $K\to P_\infty \to Y$, and this shows that $\mathcal{C}[\lambda]$ is closed under extensions.
\end{proof}

\begin{remark}
{\rm
Observe that the hypothesis of $\mathcal{C}$ closed under direct limits can be weakened to closed under linear direct limits, but we preferred the current statement for simplicity.
}
\end{remark}

\begin{definition}
\label{thegenerator}
{\rm Given a class $\C$ closed under direct limits, and an infinite regular cardinal $\lambda$, let $B_{\lambda}$ the free product of a set of representatives of isomorphism classes of $\lambda$-small groups in $\C$.
	}
\end{definition}
We will need the following result for the class $\C=\overline{\mathcal{C}(A)}$:

\begin{lemma}
\label{equalitylambdaB}
Let $\C$ be a class of groups closed under direct limits.
For every infinite regular cardinal, $\C[\lambda]=\mathcal{C}(B_{\lambda})$.

\end{lemma}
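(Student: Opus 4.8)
The plan is to show both inclusions, exploiting the characterization of $\C[\lambda]$ as the class of direct limits of $\lambda$-small groups in $\C$ (Proposition \ref{lambda-closed}), together with the closure properties established in the Corollary and in Proposition \ref{lambda-extensions}. For the inclusion $\mathcal{C}(B_\lambda)\subseteq\C[\lambda]$ I would first observe that $B_\lambda$ itself lies in $\C[\lambda]$. This is not completely free: $B_\lambda$ is the free product of all $\lambda$-small groups in $\C$ up to isomorphism, so $B_\lambda$ is typically \emph{not} $\lambda$-small. But $\C[\lambda]$ is closed under direct limits (the Corollary), and $B_\lambda$ is the direct limit (filtered colimit) of the free products of finite subfamilies; each such finite free product is $\lambda$-small (since $\lambda$ is regular, a finite product/coproduct of groups of size $<\cf\lambda=\lambda$ again has size $<\lambda$) and lies in $\C$ whenever $\C$ is closed under extensions — which is exactly our case $\C=\overline{\mathcal{C}(A)}$. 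Actually even more directly: each finite free product of $\lambda$-small groups in $\C$ is a $\lambda$-small group in $\C$, hence lies in $\C[\lambda]$ trivially, and $B_\lambda$ is their filtered colimit, so $B_\lambda\in\C[\lambda]$. Since $\C[\lambda]$ is closed under direct limits, and $\mathcal{C}(B_\lambda)$ is by definition the smallest direct-limit-closed class containing $B_\lambda$, we get $\mathcal{C}(B_\lambda)\subseteq\C[\lambda]$.

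For the reverse inclusion $\C[\lambda]\subseteq\mathcal{C}(B_\lambda)$, I would use the remark following the Corollary: the $\lambda$-small groups in $\C$ generate $\C[\lambda]$ under direct limits. So it suffices to show every $\lambda$-small group $C\in\C$ lies in $\mathcal{C}(B_\lambda)$. But such a $C$ is a retract of $B_\lambda$: the inclusion of the free factor $C\hookrightarrow B_\lambda$ is split by the homomorphism $B_\lambda\to C$ that is the identity on the factor $C$ and sends every other free factor trivially. A retract is a direct limit (the standard telescope/idempotent-splitting argument, already invoked in the proof of Proposition \ref{lambda-closed}), hence $C\in\mathcal{C}(B_\lambda)$. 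Taking direct limits and using that $\mathcal{C}(B_\lambda)$ is closed under them, every group in $\C[\lambda]$ lies in $\mathcal{C}(B_\lambda)$.

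The only genuinely delicate point is the implicit set-theoretic bookkeeping: one must be sure that a \emph{set} of representatives of isomorphism classes of $\lambda$-small groups in $\C$ exists, so that the free product $B_\lambda$ makes sense — this is fine because $\lambda$-small groups have bounded cardinality, hence form a set up to isomorphism. A second, milder point is verifying that finite free products of $\lambda$-small groups are again $\lambda$-small, which is where regularity of $\lambda$ (so that $\cf\lambda=\lambda$ is not a limit of boundedly many smaller cardinals) is used; for $\lambda=\aleph_0$ this is just ``finite $\ast$ finite $=$ finite'' and in general it is an elementary cardinal computation. I expect the retract argument and the filtered-colimit presentation of $B_\lambda$ to be entirely routine once these set-theoretic preliminaries are pinned down, so the write-up should be short.
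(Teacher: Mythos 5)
Your proof follows essentially the same route as the paper: the inclusion $\C[\lambda]\subseteq\mathcal{C}(B_\lambda)$ via the retract argument (each $\lambda$-small group in $\C$ is a free factor, hence a retract, of $B_\lambda$, and retracts are direct limits), and the reverse inclusion by exhibiting everything in $\mathcal{C}(B_\lambda)$ as a direct limit of $\lambda$-small groups in $\C$ and invoking Proposition \ref{lambda-closed}. That part is fine.

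However, one auxiliary claim you make is false: a finite free product of $\lambda$-small groups need \emph{not} be $\lambda$-small. Your slogan ``finite $\ast$ finite $=$ finite'' for $\lambda=\aleph_0$ is exactly the case where it fails: $\mathbb{Z}/2\ast\mathbb{Z}/2$ is the infinite dihedral group, so it is not $\aleph_0$-small (you have apparently conflated the free product with the direct product; for uncountable regular $\lambda$ the cardinal estimate $\#(G_1\ast G_2)\le\max(\#G_1,\#G_2,\aleph_0)<\lambda$ does go through, but not for $\lambda=\aleph_0$, which is precisely the cardinal used in Example \ref{concrete}). Fortunately your argument does not need this claim. To see that $B_\lambda\in\C[\lambda]$ it suffices to note that $B_\lambda$ is the coproduct (a direct limit) of its free factors, each of which is a $\lambda$-small group in $\C$ and therefore lies in $\C[\lambda]$ by Proposition \ref{lambda-closed}; closure of $\C[\lambda]$ under direct limits (the Corollary) then gives $B_\lambda\in\C[\lambda]$, with no smallness assertion about the intermediate free products required. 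With that correction the write-up is sound.
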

\begin{proof} If $K$ is an element of $\C[\lambda]$, it is a direct limit of $\lambda$-small groups in $\C$, by Proposition~\ref{lambda-closed}. But every such $\lambda$-small group is a retract of $B_{\lambda}$, so it belongs to $\mathcal{C}(B_{\lambda})$,
and hence so does $K$ since $\mathcal{C}(B_{\lambda})$ is closed under direct limits.
Conversely, every group $H$ in $\mathcal{C}(B_{\lambda})$ is an (iterated) direct limit of copies of $B_{\lambda}$,
and in particular a direct limit of $\lambda$-small groups, so $H\in \C[\lambda]$ and we are done.
\end{proof}

Now we are ready to prove the main theorem in this section:

\begin{theorem}
\label{maintheorem}
For any group $A$, there exists a group $B$ such that $\overline{\mathcal{C}(A)}=\mathcal{C}(B)$.
\end{theorem}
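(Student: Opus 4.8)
The plan is to specialize the machinery of this section to the class $\C=\overline{\mathcal{C}(A)}$, which by definition is the smallest class of groups containing $A$ that is closed under direct limits and extensions; the cellular generator we seek will be one of the groups $B_\lambda$ of Definition~\ref{thegenerator}. First I would fix an infinite regular cardinal $\lambda$ large enough that $A$ is itself $\lambda$-small — for instance $\lambda=(\#A+\aleph_0)^+$, which is a successor cardinal, hence regular, and satisfies $\cf(\lambda)=\lambda>\#A$. Then $A$ is a $\lambda$-small group and obviously $A\in\C$.

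Next I would observe that $A\in\C[\lambda]$. This is immediate from the definition: any homomorphism $K\to A$ with $K$ a $\lambda$-small group factors through the $\lambda$-small group $A\in\C$ itself via the identity; alternatively one applies Proposition~\ref{lambda-closed} to the one-term direct system $\{A\}$. Now $\C[\lambda]$ is closed under direct limits by the corollary to Proposition~\ref{lambda-closed} and closed under extensions by Proposition~\ref{lambda-extensions}. Since $\C[\lambda]$ contains $A$ and is closed under both operations, minimality of $\overline{\mathcal{C}(A)}$ yields $\overline{\mathcal{C}(A)}\subseteq\C[\lambda]$; the opposite inclusion $\C[\lambda]\subseteq\C=\overline{\mathcal{C}(A)}$ holds by the very definition of $\C[\lambda]$. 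Hence $\C[\lambda]=\overline{\mathcal{C}(A)}$.

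Finally, applying Lemma~\ref{equalitylambdaB} with this $\C$ gives $\C[\lambda]=\mathcal{C}(B_\lambda)$, so $B=B_\lambda$ does the job: it is the free product of a set of representatives of the isomorphism classes of $\lambda$-small groups in $\overline{\mathcal{C}(A)}$, and $\overline{\mathcal{C}(A)}=\mathcal{C}(B)$. This also records the promised description of $B$ purely in terms of groups in the class $\overline{\mathcal{C}(A)}$.

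I do not anticipate a real obstacle in this final argument, as the substantive content has already been established: the internal characterization of $\C[\lambda]$ (Proposition~\ref{lambda-closed}), its closure under extensions (Proposition~\ref{lambda-extensions}), and the identification of its cellular generator (Lemma~\ref{equalitylambdaB}). The only points needing care are set-theoretic: that a successor cardinal has cofinality equal to itself, so that $A$ is indeed $\lambda$-small for the chosen $\lambda$, and that the isomorphism classes of $\lambda$-small groups in $\C$ form a set, so that $B_\lambda$ is a well-defined group.
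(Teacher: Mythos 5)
Your argument is essentially the paper's own proof: choose $\lambda$ so that $A$ is $\lambda$-small, use Proposition~\ref{lambda-extensions} and the closure of $\C[\lambda]$ under direct limits to conclude $\overline{\mathcal{C}(A)}=\overline{\mathcal{C}(A)}[\lambda]$, and then invoke Lemma~\ref{equalitylambdaB} to identify this class with $\mathcal{C}(B_\lambda)$. Your choice of the successor cardinal $(\#A+\aleph_0)^+$ is in fact tidier than the paper's request for a regular \emph{limit} cardinal of large cofinality (which, for infinite $A$, would be a weakly inaccessible cardinal), but this does not change the substance of the argument.
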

\begin{proof} Let $A$ be any group, $\lambda$ an infinite regular limit cardinal whose cofinality is bigger than $|A|$. Then, by Lemma~\ref{equalitylambdaB}, we have $\overline{\mathcal{C}(A)}[\lambda]=\mathcal{C}(B)$, where $B=B_{\lambda}$. In particular, the class $\overline{\mathcal{C}(A)}[\lambda]$ is closed under direct limits. The conditions over $\lambda$ imply that $A$ is $\lambda$-small, and hence $\mathcal{C}(A)\subseteq \overline{\mathcal{C}(A)}[\lambda]$. But by Proposition~\ref{lambda-extensions}, $\overline{\mathcal{C}(A)}[\lambda]$ is closed under extensions, so $\overline{\mathcal{C}(A)}\subseteq \overline{\mathcal{C}(A)}[\lambda]$. On the other hand,  $\overline{\mathcal{C}(A)}[\lambda]$ is a subclass of $\overline{\mathcal{C}(A)}$ by definition, and the result follows.
\end{proof}

It is likely that the assumptions over the cardinal may be relaxed, but this not affects the existence or not of the cellular generator.

In next section we will use different cellular generators for concrete computations and in particular, and perhaps more importantly, to define in general the co-reflection associated to the class $\overline{\mathcal{C}(A)}$ for \emph{every} group $A$.

\section{Closed classes}
\label{closedclasses}

\subsection{Classes of groups and associated co-reflections}

Let $A$ be a group. The goal of this section is to analyze the relations between four different classes that contain the group $A$, namely:

\begin{itemize}

\item The \emph{cellular} class $\mathcal{C}(A)$, the closure of $\{A\}$ under direct limits. Their elements are called {\it $A$-cellular groups}.

\item The \emph{socular} class $\mathcal{C}_q(A)$, the closure of $\{A\}$ under direct limits  and quotients. Their elements are called {\it $A$-generated} or {\it $A$-socular groups}.

\item The \emph{radical} class $\mathcal{C}_t(A)$, the closure of $\{A\}$ under direct limits, extensions and quotients. Their elements are called {\it $A$-constructible} or {\it $A$-radical groups}.

\item The \emph{acyclic} class $\overline{\mathcal{C}(A)}$, the closure of $\{A\}$ under direct limits and extensions. Their elements are called {\it $A$-acyclic  groups}.

\end{itemize}

The hierarchy between these classes can be depicted in the following way:
\begin{equation}
\label{hierarchy}
\xymatrix{\mathcal{C}(A) \ar@{^{(}->}[r] \ar@{^{(}->}[d] &  \overline{\mathcal{C}(A)} \ar@{^{(}->}[d] \\
\mathcal{C}_q(A) \ar@{^{(}->}[r] &  \mathcal{C}_t(A) }
\end{equation}
We use in particular the terminology of W. Chach\'olski, who studied the problem in the non singly-generated case in \cite{Cha14}.

It is known that a group $G$ is $A$-cellular if and only if $C_AG=G$ (\cite{RS01}, Section 3). We will check now that similar statements hold for $A$-generated and $A$-constructible groups. Probably this result is known to experts, but we do not know any explicit proof of it. We offer purely algebraic arguments here (see also \cite[Theorem~ 6]{RS00} for part (2)).

\begin{proposition}
\label{ST}
Let $A$ and $G$ be groups. Then:
\begin{enumerate}
\item $G$ is $A$-generated if and only if $S_AG=G$.
\item $G$ is $A$-constructible if and only if $T_AG=G$.
\end{enumerate}
\end{proposition}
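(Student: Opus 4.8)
The plan is to prove both equivalences by the same two-sided strategy: show that the co-reflection value ($S_A G$ or $T_A G$) always lies in the relevant class (so if it equals $G$, then $G$ is in the class), and conversely show that the class is characterized by a closure property which forces the co-reflection to be everything. For the socle: if $G$ is $A$-generated, it is obtained from $A$ by iterated direct limits and quotients. Every quotient of a group generated by images of $A$ is again generated by images of $A$ (compose $A \to H \to H/N$), and a direct limit of subgroups equal to their own $A$-socle has $A$-socle the whole limit, by a standard compactness argument: any $a \in \colim H_\alpha$ comes from some $H_\alpha$, and the image of $A$ realizing it inside $H_\alpha$ maps into the colimit. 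Hence the class of groups $G$ with $S_A G = G$ contains $A$ and is closed under direct limits and quotients, so it contains $\mathcal{C}_q(A)$. For the reverse inclusion, note $S_A G$ is a quotient of a free product of copies of $A$ (it is generated by images of $A$), hence is $A$-generated, so if $S_A G = G$ then $G \in \mathcal{C}_q(A)$.

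For the radical: recall the constructive description given in the excerpt, $T_A G = \bigcup_i T^i$ with $T^0 = S_A G$ and $T^{i+1}/T^i = S_A(G/T^i)$. Each $T^i$ is $A$-constructible by transfinite induction: $T^0$ is $A$-generated hence $A$-constructible; given $T^i$ is $A$-constructible, $T^{i+1}$ is an extension $1 \to T^i \to T^{i+1} \to S_A(G/T^i) \to 1$ with both ends $A$-constructible (the quotient is $A$-generated), so $T^{i+1}$ is $A$-constructible since $\mathcal{C}_t(A)$ is closed under extensions; and limit stages are direct limits, handled by closure under direct limits. Thus if $T_A G = G$ then $G \in \mathcal{C}_t(A)$. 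Conversely, I would show the class $\{G : T_A G = G\}$ contains $A$ and is closed under direct limits, quotients and extensions, hence contains $\mathcal{C}_t(A)$. Closure under direct limits: again a compactness argument combined with the fact that $T_A$ commutes with directed unions of subgroups closed under $T_A$. Closure under quotients: if $G = T_A G$ and $N \trianglelefteq G$, apply the radical construction to $G/N$; the image of each $T^i$ surjects onto a piece of the radical filtration of $G/N$, so $T_A(G/N) = G/N$. Closure under extensions: given $1 \to X \to G \to Z \to 1$ with $T_A X = X$ and $T_A Z = Z$, observe $X \leq T_A G$ (since $S_A$ of a subgroup embeds, one checks $X = T_A X \leq T_A G$ by comparing filtrations), and then $G/T_A G$ is a quotient of $Z$ with no nontrivial maps from $A$, forcing $G = T_A G$.

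The main obstacle I expect is the extension case for the radical, specifically verifying cleanly that $X \leq T_A G$ when $X$ is a normal $A$-radical subgroup of $G$. This requires knowing that the transfinite socle filtration behaves well with respect to the subgroup $X$: intuitively $T_A G$ must ``absorb'' all of $X$ because $X$ is built entirely out of $A$-socles, but making this precise means comparing the filtration $\{T^i\}$ of $G$ with the filtration of $X$ and checking at each (possibly limit) stage that nothing of $X$ escapes. The remaining steps — the compactness arguments for direct limits and the quotient case — are routine, essentially the same argument already used in the proof of Proposition \ref{lambda-closed}, and the fact that $S_A G$ is itself $A$-generated (being a quotient of a free product of copies of $A$) is immediate from the definition of the socle.
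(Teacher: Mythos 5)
Your overall strategy coincides with the paper's: in each case one shows that the class $\{G : FG = G\}$ contains $A$ and is closed under the relevant operations (hence contains the generated class), and conversely that $FG$ itself lies in the generated class --- $S_AG$ because it is a quotient of a free product of copies of $A$, and $T_AG$ via its transfinite socle filtration, exactly as you do. The one place where you diverge, and which you flag as the main obstacle, is the verification that $X \leq T_AG$ in the extension step, which you propose to settle by comparing the transfinite filtrations of $X$ and of $G$. That comparison can be made to work, but it is unnecessary: the paper instead invokes the monotonicity $f(T_AH) \subseteq T_AG$ for an \emph{arbitrary} homomorphism $f : H \to G$, which follows in one line from the universal property defining $T_A$ (put $N = f^{-1}(T_AG)$; then $H/N$ embeds into $G/T_AG$, so $\Hom(A, H/N) = 0$ and hence $T_AH \leq N$). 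Applied to the inclusion $X \hookrightarrow G$ this gives $X = T_AX \leq T_AG$ at once, and applied to a projection it disposes of the quotient case with no filtration chase. One further small caution: your ``compactness argument'' for closure under direct limits only covers filtered colimits, whereas the paper's direct limits include pushouts; the uniform fix, which both you and the paper use elsewhere, is that any colimit is a quotient of the free product of its terms, combined with the observation that a free product of groups in the class is again in the class (each factor lands inside the socle, respectively radical, of the free product by monotonicity, and the factors generate it).
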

\begin{proof}
Let $\C$ be the class of groups $G$ such that $S_AG=G$. Clearly $A\in \C$. Suppose that $H\in \C$, and  $\pi: H \to G$ is an epimorphism. If $\pi(x) \in G/N$, as $S_AG=G$, there exists $\varphi_i:A\to G$, and $a_i\in A$ such that $x=\varphi_1(a_1) \varphi_2(a_2) \cdots \varphi_n(a_n)$. Therefore $\pi(x)= \pi\varphi_1(a_1) \pi\varphi_2(a_2) \cdots \pi\varphi_n(a_n)$, hence $\pi(x)\in S_A(G/N)$, and this ensures that $\C$ is closed under quotients. A similar argument shows that $\C$ is closed under direct limits, and hence $\C_q(A) \subset \C$. Conversely, if $S_A(G)=G$, then $G$ is a quotient of a free product of copies of $A$, so $G\in \C_q(A)$. Thus $\C\subset \C_q(A)$, and we get  $\C_q(A)=\C$.


Let us check the second statement. Consider now the class $\mathcal{C}$ of groups such that $G=T_AG$. Clearly $A$ belongs to $\mathcal{C}$, so let us see that $\mathcal{C}$ is closed under quotients and extensions. First, for every homomorphism of groups $f:H\rightarrow G$ we have $f(T_AH)\subseteq T_AG$. Then, if $H=T_AH$ and $G$ is a quotient of $H$, $G=T_AG$, and hence $G$ belongs to $\mathcal{C}$. On the other hand, assume that $G$ is an extension $G_1\rightarrow G\rightarrow G_2$, with $G_1$ and $G_2$ in $\mathcal{C}$. As $G_1=T_AG_1$, $G_1\subseteq T_AG$. Hence, there is a projection $G_2\rightarrow G/T_AG$. But $G_2=T_AG_2$ and $T_A(G/T_AG)=0$, and thus $G/T_AG=0$ and $G=T_AG.$ We obtain that $\mathcal{C}$ is also closed under extensions, and then $\mathcal{C}_t(A)\subseteq \mathcal{C}$.

Now assume $G=T_AG$. Then $G$ is constructed out of $A$ by means of $A$-socles, extensions and a direct union, and thus, by the previous case and the fact that a direct union is a direct limit, $G$ belongs to the closure of $A$ by direct limits, quotients and extensions. So $\mathcal{C}\subseteq \mathcal{C}_t(A)$, and we are done.
\end{proof}

The previous result provides co-reflections for the respective classes.

\begin{corollary}
In the previous notation, the categories $\mathcal{C}(A)$, $\mathcal{C}_q(A)$ and $\mathcal{C}_t(A)$ are co-reflective. The co-reflections are respectively given by the $A$-cellular cover, the $A$-socle and the $A$-radical. \qed
\end{corollary}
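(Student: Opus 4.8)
The plan is to check that each of the three functors $F\in\{C_A,S_A,T_A\}$, equipped with its canonical natural transformation $\eta\colon F\Rightarrow\mathrm{Id}$ (the cellular cover augmentation, and the socle and radical subgroup inclusions), exhibits the corresponding subcategory as co-reflective. By the universal-arrow description of adjoints, and since these classes are full subcategories closed under isomorphisms, it suffices to show that for every group $G$ the object $FG$ lies in the appropriate subcategory and that $\eta_G\colon FG\to G$ is universal among maps into $G$ from objects of that subcategory, i.e. every homomorphism $H\to G$ with $H$ in the subcategory factors uniquely through $\eta_G$. I would treat the three cases in parallel, invoking Proposition~\ref{ST} to identify each subcategory with a class of $F$-fixed groups.

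For the socle, Proposition~\ref{ST} identifies $\mathcal{C}_q(A)$ with the class of $G$ satisfying $S_AG=G$. Since $S_AG$ is generated by the images of the homomorphisms $A\to G$, it satisfies $S_A(S_AG)=S_AG$, so $S_AG\in\mathcal{C}_q(A)$; any homomorphism $f\colon H\to G$ sends images of $A$ to images of $A$, so $f(S_AH)\subseteq S_AG$, which both makes $S_A$ a functor and makes $\eta$ natural. If $H=S_AH$ then $f(H)\subseteq S_AG$, so $f$ factors through the monomorphism $\eta_G$, uniquely since $\eta_G$ is injective. For the radical, the same argument runs using the inclusion $f(T_AH)\subseteq T_AG$ already proved inside Proposition~\ref{ST}: this makes $T_A$ a functor with $\eta$ natural, the idempotence $T_A(T_AG)=T_AG$ follows from the transfinite construction of $T_A$ recalled in Section~\ref{background} (the chain defining $T_AG$ stabilizes), so $T_AG\in\mathcal{C}_t(A)$ by Proposition~\ref{ST}, and for $H=T_AH$ the map $f$ lands in $T_AG$ and factors uniquely through $\eta_G$.

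For the cellular class, $C_AG\in\mathcal{C}(A)$ holds by construction and, by \cite{RS01}, $G$ is $A$-cellular iff $C_AG=G$; the defining property of the cellular cover recalled in Section~\ref{background} — that $C_AG\to G$ induces a bijection $\Hom(L,C_AG)\simeq\Hom(L,G)$ for every $L\in\mathcal{C}(A)$, via composition with $\eta_G$ — is precisely the statement that every $f\colon H\to G$ with $H\in\mathcal{C}(A)$ factors uniquely through $\eta_G$. Hence $\mathcal{C}(A)$ is co-reflective with co-reflection $C_A$.

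The point deserving the most care is the cellular case: unlike the socle and radical inclusions, the augmentation $C_AG\to G$ need not be injective, so uniqueness of the factorization cannot be read off from $\eta_G$ being a monomorphism and one must invoke the full-strength bijection $\Hom(L,C_AG)\simeq\Hom(L,G)$ rather than a bare injectivity argument. Since that bijection is built into the definition of $C_A$ (its existence being guaranteed by \cite[Proposition 3.1]{RS01}), even this reduces to quoting facts already in place, so the whole corollary becomes a short formal verification of the universal property in three nearly identical cases, requiring no idea beyond Proposition~\ref{ST} and the characterizations of $C_A$, $S_A$, $T_A$ recalled in Section~\ref{background}.
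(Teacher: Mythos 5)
Your proposal is correct and is essentially the argument the paper intends: the corollary is stated with no written proof precisely because, as you observe, it reduces to Proposition~\ref{ST} together with the recalled characterizations of $C_A$, $S_A$, $T_A$ (idempotence plus the couniversal factorization property of $\eta_G$). Your one substantive observation --- that in the cellular case uniqueness of the factorization must come from the $\Hom(L,C_AG)\simeq\Hom(L,G)$ bijection rather than from injectivity of the augmentation --- is accurate and is exactly the point the paper leaves implicit.
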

\begin{remark}
{\rm
Observe that the arguments of the proof of the previous proposition are algebraic and quite formal. Hence, they are supposed to work in more general categories, as for example categories of modules.
}
\end{remark}

The next question is immediate: giving a group $A$, is the class $\overline{\mathcal{C}(A)}$ co-reflective? In order to undertake this problem, we introduce one of the main definitions in this context:

\begin{definition}
{\rm

Given a group $A$ and the acyclic class $\overline{\mathcal{C}(A)}$, the co-reflection associated to this class (if it exists) will always be denoted by $D_A$. For a group $G$, $D_AG$ will be called the $A$-\emph{acyclic cover} of $G$.
}
\end{definition}

 We will see in this section that Theorem \ref{maintheorem} gives a general answer to the previous question, and in particular assures the existence of $D_A$ for every group $A$. Before establishing this result, it is worth to point out the partial solution to the question obtained in \cite{RS01}, where the authors assumed the existence of a 2-dimensional Moore space for $A$ in order to construct the co-reflection. Recall that given a group $A$, a \emph{Moore space} of type $M(A,1)$ is a CW-complex $X$ such that $\pi_1X=A$ and $H_iX=0$ for $i>1$:

\begin{theorem}[\cite{RS01}, Theorem 4.3]
\label{DAH}
Let $A$ be the fundamental group of a two-dimensional Moore space.  Then the
inclusion
$\overline{\mathcal{C}(A)}\hookrightarrow Groups$ admits a right adjoint $D_A$.
Furthermore, for each group $H$, there is a central extension
$$
L \hookrightarrow D_A(H) \twoheadrightarrow T_A(H)
$$
such that $\Hom(A_{\rm ab},L)= \Ext(A_{\rm ab},L)=0$, and this extension is
initial with respect to this property.
\end{theorem}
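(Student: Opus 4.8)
The plan is to derive the statement from the two structural results at our disposal. For the existence of the right adjoint I would invoke Theorem~\ref{maintheorem}: it provides a group $B$ with $\overline{\mathcal{C}(A)}=\mathcal{C}(B)$, and a class of the form $\mathcal{C}(B)$ is co-reflective with co-reflection the $B$-cellular cover $C_B$ (a group belongs to $\mathcal{C}(B)$ exactly when it is fixed by $C_B$), so one may set $D_A:=C_B$. For the central-extension description I would combine Theorem~\ref{CAH}, applied to $B$, with an analysis of the counit of this adjunction. The two-dimensional Moore hypothesis on $A$ will enter only at the end; in the original topological argument of~\cite{RS01} it was needed already for existence, since there $D_AH$ was built from Chach\'olski's $M$-cellular cover of $K(H,1)$ with $M$ the Moore space, whereas here Theorem~\ref{maintheorem} makes existence hypothesis-free.

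Next I would identify the counit with the displayed extension. As $A\in\overline{\mathcal{C}(A)}$ is $D_A$-colocal, the counit $\eta_H\colon D_AH\to H$ induces bijections on $\Hom(L,-)$ for all $L\in\overline{\mathcal{C}(A)}$. The image $\im(\eta_H)$ is normal in $H$ (a conjugate of $\eta_H$ factors through $\eta_H$, using $\Hom(D_AH,H)\cong\Hom(D_AH,D_AH)$), it lies in $\mathcal{C}_t(A)$ since it is a quotient of $D_AH\in\overline{\mathcal{C}(A)}\subseteq\mathcal{C}_t(A)$, and $H/\im(\eta_H)$ admits no nontrivial homomorphism from $A$ (one would lift to $D_AH$ and be absorbed into $\im(\eta_H)$ after composing with $\eta_H$). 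Because $T_AH$ is simultaneously the largest normal $\mathcal{C}_t(A)$-subgroup of $H$ and the smallest normal $N$ with $\Hom(A,H/N)=0$, this forces $\im(\eta_H)=T_AH$; thus $\eta_H$ factors as $D_AH\twoheadrightarrow T_AH\hookrightarrow H$, the surjection is the natural transformation $D_A\to T_A$ induced by $\overline{\mathcal{C}(A)}\subseteq\mathcal{C}_t(A)$, and we set $L:=\Ker(D_AH\twoheadrightarrow T_AH)=\Ker\eta_H$. Identifying $S_BH$ with $T_AH$ (again, both are the smallest normal $N$ with $\Hom(A,H/N)=0$), Theorem~\ref{CAH} applied to $B$ shows $L$ is central in $D_AH$ and that $\Hom(B_{ab},L)=0$; the latter gives $\Hom(A_{ab},L)=0$, which also follows directly because any map $A\to L$ composes to the trivial map $A\to H$ and $A$ is $D_A$-colocal.

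It remains to produce $\Ext(A_{ab},L)=0$ together with the universal property. The Moore hypothesis yields $H_2(A;\Z)=0$, as this group is a quotient of $H_2(M)=0$ by Hopf's formula, so the universal-coefficient theorem gives a natural isomorphism $H^2(A;N)\cong\Ext(A_{ab},N)$ for every abelian $N$; hence ``$\Ext(A_{ab},L)=0$'' is the condition ``$H^2(A;L)=0$'', and ``$\Hom(A_{ab},-)=\Ext(A_{ab},-)=0$'' is ``$\Hom(A,-)=H^2(A;-)=0$''. Theorem~\ref{CAH} applied to $B$ only supplies triviality of the natural map $\Hom(B,H)\to H^2(B;L)$, so to promote this to $H^2(A;L)=0$ and to get initiality I would realize $D_AH$ as a transfinite iteration of the construction of Theorem~\ref{CAH} along the chain $T^0\leq T^1\leq\cdots$ defining $T_AH$: put $D^{(0)}=C_AH$, let $D^{(\xi+1)}$ be a central extension with quotient $C_A(H/N^{(\xi)})$ equipped with a compatible homomorphism to $H$ (where $N^{(\xi)}=\im(D^{(\xi)}\to H)$), take direct limits at limit ordinals, and let $D_AH$ be the stable value, whose quotient is $T_AH=\bigcup_\xi N^{(\xi)}$. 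Closure of $\overline{\mathcal{C}(A)}$ under extensions is exactly what allows the tower to be prolonged past every stage, and in the limit the triviality of the successive classes in $H^2(A;-)$ is promoted to outright vanishing $H^2(A;L)=0$; the stepwise minimality afforded by Theorem~\ref{CAH} then yields, for every central extension $L'\rightarrowtail E\twoheadrightarrow T_AH$ with $\Hom(A_{ab},L')=\Ext(A_{ab},L')=0$, a unique homomorphism $D_AH\to E$ over $T_AH$, which is the asserted initiality.

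The step I expect to be the main obstacle is precisely this last one. Theorem~\ref{CAH} guarantees only triviality of a map into $H^2(A;-)$, not vanishing of $H^2(A;-)$, so one must argue carefully that iterating the construction and using the closure of $\overline{\mathcal{C}(A)}$ under extensions genuinely forces $H^2(A;L)=0$, and that the resulting object is initial among central extensions of $T_AH$ satisfying the $H$-independent condition $\Hom(A_{ab},-)=\Ext(A_{ab},-)=0$. This is also the only place where the two-dimensional Moore hypothesis is indispensable: without $H_2(A;\Z)=0$ one recovers merely the weaker, $H^2$-triviality form of the statement (as in Theorem~\ref{CAH}) rather than the stated $\Ext$-form.
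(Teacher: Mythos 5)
This statement is quoted verbatim from \cite{RS01} (Theorem 4.3); the present paper does not reprove it, and the source's proof is homotopical: $D_AH$ is realized as $\pi_1$ of the homotopy fibre of $K(T_AH,1)\to P_{M}K(T_AH,1)$ for $M$ a two-dimensional Moore space $M(A,1)$, so that $L=\pi_2P_{M}K(T_AH,1)$, and the conditions $\Hom(A_{ab},L)=\Ext(A_{ab},L)=0$ are read off from the effect of nullification on $\pi_2$, using crucially that $\Sigma M\simeq M(A_{ab},2)$. Your first two paragraphs are essentially correct and reproduce what the paper itself does later (Corollary~\ref{corocorre} and the surrounding discussion): $D_A=C_B$ exists for a cellular generator $B$ given by Theorem~\ref{maintheorem}, the counit lands on $S_BH=T_BH=T_AH$, the kernel $L$ is central by Theorem~\ref{CAH} applied to $B$, and $\Hom(A_{ab},L)=0$ follows since $A$ is $D_A$-colocal. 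Your reduction of ``$\Ext(A_{ab},L)=0$'' to ``$H^2(A;L)=0$'' via Hopf's formula and universal coefficients is also correct under the Moore hypothesis.

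The genuine gap is exactly where you place it, and it is not a technicality: nothing in your argument yields $\Ext(A_{ab},L)=0$ or the initiality. Theorem~\ref{CAH} applied to $B$ only gives that the single map $\Hom(B,H)\to H^2(B;K)$ is trivial, which is strictly weaker than $H^2(A;K)=0$, and its universal property is relative to that weaker condition, not to the $H$-independent condition $\Hom(A_{ab},-)=\Ext(A_{ab},-)=0$ of the statement. The transfinite tower you sketch cannot close this distance: $C_B$ is idempotent, so iterating the construction along the chain defining $T_AH$ reproduces $D_AH$ without improving the cohomological information, the kernels of your intermediate stages $D^{(\xi+1)}\to C_A(H/N^{(\xi)})$ are left unspecified, and triviality of one particular map into $H^2(A;-)$ at each stage does not accumulate to vanishing of the group $H^2(A;L)$ in the colimit. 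In \cite{RS01} this is precisely the point where topology is indispensable -- the $\Ext$-vanishing comes from the $M(A_{ab},2)$-nullity of the base of the fibration -- and no purely algebraic substitute is available in the present paper either. So your proposal establishes the existence of the right adjoint (indeed for arbitrary $A$, which is the paper's actual contribution) and a weak form of the extension, but not the stated $\Ext$-condition nor the universality.
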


This result has interesting features. First, for $A$ in the conditions of the statement, it gives a manageable way to compute the value of the $A$-acyclic cover of any group $H$, as it is identified with the fundamental group of the homotopy fiber of the $M(A,1)$-nullification of $K(T_AH,1)$, i.e. localization with respect to the constant map over $M(A,1)$ (see \cite{F97}, chapter 1). Moreover, the similarity with Theorem \ref{CAH} should be remarked, and in fact for every group $H$ there exists a natural homomorphism $C_AH\rightarrow D_AH$ which induces a commutative diagram of extensions:

\begin{equation}
\label{diagram}
\xymatrix{K \ar[d] \ar[r] & C_AH \ar[d] \ar[r] & S_AH \ar[d] \\
L  \ar[r] & D_AH \ar[r] & T_AH, }
\end{equation}
where $K$ and $L$ are the respective kernels and the homomorphism $S_AH\rightarrow T_AH$ is injective. Let $M=M(A,1)$ be a two-dimensional Moore space for $A$ and $Y=\textrm{Cof}_MK(S_AH,1)$ be the corresponding Chach\'olski homotopy cofiber  (\ref{cof1}). Then it is stated in the proof of Theorem 3.7 in \cite{RS01} that $K=\pi_2P_{\Sigma M}Y$. Analogously, let $Y'$ be $\textrm{Cof}_MK(T_AH,1)$; then, the proof of Theorem 4.3 in the mentioned article proves that $L=\pi_2P_{M}Y'$. This result will be useful in Proposition \ref{counterexample} below.

Remark that the co-reflection $D_A$ is built in Theorem \ref{DAH} in a homotopical (non-algebraic) way. This homotopical nature of the construction made it impossible for these authors to define $D_A$ for a general $A$, as the definition depended on the existence of a two-dimensional Moore space for the group. In the sequel we will avoid the difficulty by defining $D_A$ as a cellular cover in a pure group-theoretical way.

We present now our general construction of the co-reflection $D_A$ for every group $A$.

\begin{proposition}
Let $A$ be a group, and $B$ a cellular generator of the class $\overline{\mathcal{C}(A)}$. Then a group $G$ is $A$-acyclic if and only if $C_BG=G$.
\end{proposition}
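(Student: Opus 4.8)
The plan is to show that $C_B$ is precisely the co-reflection onto $\overline{\mathcal{C}(A)}=\mathcal{C}(B)$, so that "$A$-acyclic" coincides with "$C_B$-colocal". The key observation is that the cellular cover $C_B$ is, by its defining universal property (as recalled in the definition of $C_BH$ and in \cite[Proposition 3.1]{RS01}), the co-reflection associated to the co-reflective subcategory $\mathcal{C}(B)$: it lands in $\mathcal{C}(B)$, and the augmentation $C_BH\to H$ induces a bijection $\Hom(L,C_BH)\simeq \Hom(L,H)$ for every $L\in\mathcal{C}(B)$. Hence $C_BG=G$ holds if and only if the identity $G\to G$ is (up to isomorphism) the universal map from an object of $\mathcal{C}(B)$, which happens exactly when $G$ itself lies in $\mathcal{C}(B)$.

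First I would argue the easy direction: if $G$ is $A$-acyclic, then $G\in\overline{\mathcal{C}(A)}=\mathcal{C}(B)$ by Theorem~\ref{maintheorem} and the hypothesis that $B$ is a cellular generator. Since $G\in\mathcal{C}(B)$, the identity map $\Id_G$ already induces a bijection $\Hom(L,G)\simeq\Hom(L,G)$ for every $L\in\mathcal{C}(B)$, so by the uniqueness part of the construction of the $B$-cellular cover (\cite[Proposition 3.1]{RS01}), $C_BG\cong G$, and in fact the augmentation $C_BG\to G$ is an isomorphism; thus $C_BG=G$.

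Conversely, suppose $C_BG=G$. By construction $C_BG$ is built from copies of $B$ by iterated direct limits, so $C_BG\in\mathcal{C}(B)$; since $C_BG=G$ this gives $G\in\mathcal{C}(B)=\overline{\mathcal{C}(A)}$ (again using Theorem~\ref{maintheorem} and that $B$ is a cellular generator of $\overline{\mathcal{C}(A)}$), i.e.\ $G$ is $A$-acyclic. This closes the equivalence.

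The only real subtlety, which I would treat carefully, is matching the definition of "$C_BG=G$" with the statement "the augmentation is an isomorphism" in the easy direction: one must invoke the uniqueness of the cellular cover to know that when $G\in\mathcal{C}(B)$ the canonical map $C_BG\to G$ is not merely an abstract isomorphism of groups but the identity in the relevant sense. This is exactly the content already granted by \cite[Proposition 3.1]{RS01} (and the parenthetical remark in the definition of the cellular cover), so no new argument is needed; the proposition is essentially a formal consequence of Theorem~\ref{maintheorem} together with the universal property of $C_B$.
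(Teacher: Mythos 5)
Your proposal is correct and follows the same route as the paper, which simply declares the statement ``straightforward from Theorem~\ref{maintheorem}'': you unpack that one-liner by combining the equality $\overline{\mathcal{C}(A)}=\mathcal{C}(B)$ (which is in fact the definition of $B$ being a cellular generator, so citing Theorem~\ref{maintheorem} there is slightly redundant) with the known fact that $G\in\mathcal{C}(B)$ if and only if $C_BG=G$. Your care about the universal property and uniqueness of the cellular cover is appropriate but adds nothing beyond what the paper already takes for granted.
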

\begin{proof} It is straightforward from Theorem~\ref{maintheorem}.
\end{proof}

\begin{corollary}
\label{corocorre}

For every group $A$, the class $\overline{\mathcal{C}(A)}$ is co-reflective, and the co-reflection $D_A$ is given by $C_B$.

\end{corollary}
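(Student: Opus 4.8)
The plan is to deduce the statement directly from Theorem~\ref{maintheorem} together with the already-recorded fact that every cellular class is co-reflective; there is essentially no work left to do beyond assembling these pieces. Fix a group $A$ and let $B$ be a cellular generator of $\overline{\mathcal{C}(A)}$, which exists by Theorem~\ref{maintheorem}. Thus $\overline{\mathcal{C}(A)}=\mathcal{C}(B)$ as full subcategories of $Groups$ (both are closed under isomorphisms by our standing convention, so fullness is automatic).

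First I would recall that, by the construction of \cite[Proposition 3.1]{RS01}, the $B$-cellular cover $C_B H$ is defined for \emph{every} group $H$, and that by the Corollary following Proposition~\ref{ST}, applied with the generator $B$ in place of $A$, the cellular class $\mathcal{C}(B)$ is co-reflective with co-reflection $C_B$. Concretely, the natural map $C_B H\to H$ induces a bijection $\Hom(L,C_B H)\simeq\Hom(L,H)$ for every $L\in\mathcal{C}(B)$, which is precisely the statement that $C_B$ is right adjoint to the inclusion $\mathcal{C}(B)\hookrightarrow Groups$. Substituting $\mathcal{C}(B)=\overline{\mathcal{C}(A)}$ then gives at once that $\overline{\mathcal{C}(A)}\hookrightarrow Groups$ admits the right adjoint $C_B$; equivalently, by the Proposition immediately preceding this corollary, a group $G$ lies in $\overline{\mathcal{C}(A)}$ exactly when $C_B G=G$, so $C_B$ is indeed a co-reflection onto $\overline{\mathcal{C}(A)}$.

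As remarked, no genuine obstacle arises here: the substance has been packaged into Theorem~\ref{maintheorem} and into the cellular-cover machinery of \cite{RS01}. The only point worth a word is the independence of the construction from the chosen cellular generator $B$: any two cellular generators of $\overline{\mathcal{C}(A)}$ yield co-reflections onto one and the same subcategory, hence naturally isomorphic functors by uniqueness of adjoints, so the notation $D_A:=C_B$ is unambiguous. This justifies denoting the co-reflection associated with $\overline{\mathcal{C}(A)}$ by $D_A$ for an arbitrary group $A$, as announced.
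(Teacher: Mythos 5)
Your argument is correct and follows exactly the paper's route: invoke Theorem~\ref{maintheorem} to obtain a cellular generator $B$ with $\overline{\mathcal{C}(A)}=\mathcal{C}(B)$, and then apply the already-established co-reflectivity of cellular classes (via the corollary to Proposition~\ref{ST} and the construction of $C_B$ in \cite[Proposition 3.1]{RS01}). The remark on independence of the choice of $B$ by uniqueness of adjoints matches the paper's subsequent discussion justifying the notation $D_A$.
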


\begin{remark}\rm{
Observe in particular that, although Theorem \ref{DAH} above proves the existence of $D_A$ for some instances of $A$ and provides a way to compute the acyclic cover, it does not describe a cellular generator of the class. 
}
\end{remark}

After establishing the existence of the co-reflection $D_A$, we conclude this subsection by stating some properties of the co-reflection $D_A$ and the cellular generators of the acyclic class.

\begin{proposition}
\label{uniqueness}
If $B$ and $B'$ are cellular generators of the class $\overline{\mathcal{C}(A)}$ and $H$ is group, the cellular covers of $H$ with respect to $B$ and $B'$ are isomorphic.
\end{proposition}
\begin{proof}
As $B$ and $B'$ are cellular generators of the acyclic class, $C_B=D_A$ and $C_{B'}=D_A$.
\end{proof}

\begin{lemma}
\label{SyT}
Let $A$ be a group, $B$ a cellular generator of $\overline{\mathcal{C}(A)}$. Then $S_B=T_A$.

\end{lemma}
\begin{proof}
Both $S_B$ and $T_A$ are co-reflections over the class $\mathcal{C}_t(A)$. The result follows from uniqueness of co-reflections.
\end{proof}


\begin{corollary}
For every group $A$, we have $D_A=D_AT_A$.
\end{corollary}
\begin{proof}
It is well-known that for every group $A$, $C_A=C_AS_A$. Now, if $B$ is a cellular generator of $\mathcal{C}(A)$, $C_B=D_A$ and $S_B=T_A$. So we are done.
\end{proof}

In particular, in the computation of $D_AH$ for a group $H$ it is enough to take as input the $A$-radical of $H$.

The following proposition provides concrete descriptions of $D_A$.

\begin{proposition}
Let $H$ and $G$ be finite non-trivial groups, with $H$ simple, $|H|\lneq |G|$ and such that there is exactly one subgroup of $G$ that is isomorphic to $H$. Then $D_HG=H$, and $G/H$ does not belong to $\overline{\mathcal{C}(H)}$.
\end{proposition}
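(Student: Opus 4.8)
The plan is to prove $D_HG\cong H$ by recognizing the inclusion of the unique copy of $H$ inside $G$ as a $D_H$-colocal equivalence, and then to deduce the second assertion formally. Let $H_0\le G$ be the unique subgroup isomorphic to $H$; being the only one it is characteristic, hence normal, so $G/H\cong G/H_0$ is well defined. Recall (Theorem~\ref{maintheorem} and Corollary~\ref{corocorre}) that $D_H=C_B$ for a cellular generator $B$ of $\overline{\C(H)}$, that the $D_H$-colocal groups are exactly the members of $\overline{\C(H)}$, and that a morphism $X\to Y$ is a $D_H$-colocal equivalence precisely when it induces a bijection $\Hom(Z,X)\to\Hom(Z,Y)$ for every $Z\in\overline{\C(H)}$. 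Since $H_0\cong H\in\overline{\C(H)}$ is $D_H$-colocal, it suffices to show that $\iota\colon H_0\hookrightarrow G$ is a $D_H$-colocal equivalence: then $D_H(\iota)\colon D_HH_0\to D_HG$ is an isomorphism, so $D_HG\cong D_HH_0\cong H_0\cong H$, and naturality of $\eta\colon D_H(-)\to(-)$ identifies the structure map $D_HG\to G$ with $\iota$.

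First I would reduce this to a radical computation. By \eqref{hierarchy} we have $\overline{\C(H)}\subseteq\C_t(H)$, so Proposition~\ref{ST} gives $T_HZ=Z$ for every $Z\in\overline{\C(H)}$; together with the functoriality $f(T_HZ)\subseteq T_HG$ of the radical (established in the proof of Proposition~\ref{ST}), any homomorphism $f\colon Z\to G$ with $Z\in\overline{\C(H)}$ has $f(Z)\subseteq T_HG$. Hence, if $T_HG=H_0$, then $\iota_*\colon\Hom(Z,H_0)\to\Hom(Z,G)$ is surjective for every such $Z$; it is injective because $\iota$ is a monomorphism, so $\iota$ is a $D_H$-colocal equivalence. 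Thus the first assertion comes down to the identity $T_HG=H_0$. Since $H$ is simple, every homomorphism $H\to G$ is trivial or injective with image a subgroup of $G$ isomorphic to $H$, necessarily $H_0$; therefore $S_HG=H_0$, which is the term $T^0$ in the chain defining $T_HG$.

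The heart of the matter — and the step I expect to be the only genuine obstacle — is to show $S_H(G/H_0)=1$, i.e.\ that $G/H_0$ has no subgroup isomorphic to $H$ (again because $H$ is simple, $S_H(G/H_0)$ is trivial iff $G/H_0$ has no such subgroup); granting this, the chain defining $T_HG$ stabilizes at $T^0=H_0$, so $T_HG=H_0$. Assume for contradiction that $\bar K\le G/H_0$ with $\bar K\cong H$, and let $K\le G$ be its preimage, so $H_0\trianglelefteq K$ and $1\to H_0\to K\to H\to1$ is exact with kernel and quotient both isomorphic to the non-abelian simple group $H$. Since $Z(H_0)=1$, the conjugation map $K\to\Aut(H_0)$ restricts to the isomorphism $H_0\cong\mathrm{Inn}(H_0)$ and induces an outer action $H=K/H_0\to\Out(H_0)$; as $\Out(H_0)$ is solvable (Schreier's conjecture, a consequence of the classification of finite simple groups) while $H$ is non-abelian simple, this homomorphism is trivial, so the image of $K\to\Aut(H_0)$ lies in $\mathrm{Inn}(H_0)$, the resulting retraction $K\to H_0$ splits the sequence, and the complement acts trivially on $H_0$; hence $K\cong H_0\times H$. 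But then $H_0$ and the complementary direct factor are two distinct subgroups of $G$ isomorphic to $H$, contradicting the hypothesis. (Non-abelianness is essential: for $H=\Z/p$ and $G=\Z/p^2$ one has $\Z/p^2\in\overline{\C(\Z/p)}$, hence $D_{\Z/p}(\Z/p^2)=\Z/p^2\ne\Z/p$.) This proves $T_HG=H_0$, and with the previous paragraph, $D_HG\cong H$.

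Finally, for the second assertion: since $|H|\lneq|G|$ we get $D_HG\cong H\not\cong G$, so $G\notin\overline{\C(H)}$ (a group is $H$-acyclic iff it is fixed by $D_H=C_B$). Were $G/H_0$ in $\overline{\C(H)}$, then closure of $\overline{\C(H)}$ under extensions applied to $1\to H_0\to G\to G/H_0\to1$, together with $H_0\cong H\in\overline{\C(H)}$, would yield $G\in\overline{\C(H)}$, a contradiction. Hence $G/H\cong G/H_0$ does not belong to $\overline{\C(H)}$.
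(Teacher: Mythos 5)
Your argument is correct for nonabelian simple $H$, and at the top level it is the same strategy as the paper's: reduce everything to the identity $T_HG=H_0$ and then conclude $D_HG\cong D_H(T_HG)=D_HH_0\cong H$ (you phrase the last step via colocal equivalences, the paper via $D_H=D_HT_H$; these are equivalent). The real difference is in the middle. The paper's proof consists of the bare assertion that ``$H$ simple and normal in $G$'' gives $S_HG=T_HG=H$, whereas you actually prove the nontrivial half of that assertion, namely $S_H(G/H_0)=1$: if $G/H_0$ contained a copy of $H$, its preimage $K$ would be an extension of $H_0$ by $H$, and solvability of $\Out(H_0)$ (Schreier) forces $K\cong H_0\times H$, producing a second subgroup isomorphic to $H$ and contradicting uniqueness. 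This extra care buys two things. First, it shows the step is not formal: for general nonabelian simple $H$ one genuinely needs Schreier's conjecture (or an ad hoc bound such as $|G|<|H|^2$, which happens to hold in the paper's intended example $A_n\leq\Sigma_n$, where $G/H_0\cong\Z/2$ trivially contains no copy of $A_n$). Second, your parenthetical remark is in fact a counterexample to the proposition as literally stated: $H=\Z/p\leq G=\Z/p^2$ satisfies every hypothesis ($H$ finite, nontrivial, simple, $|H|<|G|$, unique subgroup isomorphic to $H$), yet $T_{\Z/p}(\Z/p^2)=\Z/p^2$, so $D_{\Z/p}(\Z/p^2)=\Z/p^2\neq\Z/p$ and $G/H\cong\Z/p\in\overline{\C(\Z/p)}$. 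So the hypothesis should be ``$H$ nonabelian simple'' (consistent with the paper's own remark that the result is aimed at generators with $H_2A\neq 0$). In short: there is no gap on your side; you supply a step the paper's proof omits and, in doing so, identify the hypothesis that makes it true.
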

\begin{proof} As $H$ is simple and normal in $G$, $S_HG=T_HG=H$. Thus, $$D_HG=D_HT_HG=D_HH=H,$$ as $H$ is clearly $H$-acyclic. Moreover, if $G/H$ belonged to $\overline{\mathcal{C}(H)}$, then $G$  would also belong, and that is impossible because $D_HG\neq G$.
\end{proof}

 Note that this result gives information for classes $\overline{\mathcal{C}(A)}$ with $H_2A\neq 0$. For example, if $H=A_n$, $G=\Sigma_n$ and $n\geq 5$, the proposition implies that $D_{A_n}\Sigma_n=A_n$, and hence neither $\Sigma_n$ nor $\mathbb{Z}/2$ can be constructed out of $A_n$ by direct limits and extensions. The groups for which $H_2A\neq 0$ are beyond the scope of Theorem \ref{DAH}, as the existence of the Moore space implies that $H_2A$ should be trivial there.

\begin{remark}
\rm{
We note that the use of the word ``acyclic" in our context comes from Homotopy Theory, and especially from Bousfield, who thoroughly studied the classes of spaces for which a homology theory vanishes. We would like to emphasize that the acyclic cover owes its name to the fact that, in the cases studied by Rodr\'{i}guez-Scherer in \cite{RS01}, it is the fundamental group of the homotopy fibre of a localization; in general this group may have non-trivial ordinary homology, so it is not acyclic in the usual sense of Group Homology. 
}
\end{remark}

\subsection{Comparing the classes}

From now on we will concentrate in measuring the difference between the classes above. Fix a group $A$, and let $F$ be one of the four co-reflections. Then given a group $G$, $FG$ measures in general how close the group $G$ is from belonging to the corresponding class. Moreover, the homomorphisms $S_AG\rightarrow G$, $T_AG\rightarrow G$ and $S_AG\rightarrow T_AG$ are always injective, and it is a consequence of \cite[Proposition 3.1]{RS01} and Corollary \ref{corocorre} above that $C_AG\rightarrow S_AG$ and $D_AG\rightarrow T_AG$ are always surjective. Hence, we have a commutative diagram
\begin{equation}
\label{hierarchy2}
\xymatrix{C_AG \ar@{>>}[r] \ar[d] & S_AG  \ar@{^{(}->}[d] \\
D_AG \ar@{>>}[r] &  T_AG }
\end{equation} which generalizes the diagram~(\ref{diagram}) above to the case of a general $A$. Note that in general, $C_AG\rightarrow D_AG$ is neither surjective nor injective.

Using an example, we start by showing that the four inclusions of classes in the diagram~(\ref{hierarchy}) can be strict. Of course, the previous results imply that two such classes are identical if and only if their associated co-reflections are the same.

 \begin{example}

  \label{alldiferent}
  {\rm Consider $G$ the direct product of the Mathieu group $M_{12}$ with the cyclic group $ \Z /9$. We use Proposition~\ref{coreflection-product} to compute the value of the co-reflections over this group. As the socle and the radical are always normal subgroups and $M_{12}$ is simple and contains elements of order 3, we have $S_{\Z/3}G=M_{12}\times \Z/3$ and $T_{\Z/3}G=G$.

  Moreover, taking into account that $H_2(M_{12})=\Z /2$ (see for example Section 11 in \cite{BCFS13})
  we have that $C_{\Z /3}M_{12}=\widetilde M_{12}$, the universal cover of $M_{12}$. Observe that the alternative $C_{\Z /3}M_{12}=\Z /2\times M_{12}$ is not possible as $\Z/2\times M_{12}$ is not $\Z /3$-cellular. Hence, $C_{\Z /3}G=\widetilde M_{12}\times \Z/3$.
On the other hand, $D_{\Z /3}G= \widetilde M_{12} \times \Z/9$, as $G$ is $\Z /3$-constructible.

}
\end{example}

Next we will discuss when the socular class $\mathcal{C}_q(A)$ is closed under extensions, and hence equal to the radical class $\mathcal{C}_t(A)$. For example, if $A=\mathbb{Z}/p$ for some prime $p$,
it is known that there are many groups $G$ such that $G/S_{\Z/p}G$ has again $p$-torsion, and same happens if we change $\mathbb{Z}/p$ by $\ast_{j\geq 1}\mathbb{Z}/{p^j}$; this contrasts with the case of abelian groups, in which for $A=\bigoplus_{j\geq 1}\mathbb{Z}/{p^j}$, $T_AN=S_AN$ for every abelian group.

When $A$ is the additive group of the rational numbers, the situation is similar. If $N$ is abelian, $S_{\mathbb{Q}}N=T_{\mathbb{Q}}N$, because the image of every homomorphism $\mathbb{Q}\rightarrow N$ is divisible and then splits out of $N$. However, if $N$ is not abelian the equality is not true in general. We have found no example of this situation in the literature, so we describe one in the sequel:

\begin{example}
\label{QN}
{\rm
Let $G$ be the push-out of inclusions $\mathbb{Q}\hookleftarrow \mathbb{Z} \hookrightarrow \mathbb{Z}[1/p]$.  Following (\cite{Joh97}, ch. 5), a presentation of the group $(\mathbb{Q},+)$ is given by:

$$\{x_1, x_2, x_3, \ldots \textrm{ } | \textrm{ } x_n^n=x_{n-1} \textrm{ for } n\geq 2 \}.$$

If we look at the rationals as a subgroup of the reals, the fraction $\frac{1}{m!}$ corresponds to the generator $x_m$, for every $m\geq 1$.

Analogously, a presentation of $\Z [1/p]$ is:
$$\langle y_1, y_2, y_3, \ldots \textrm{ } | \textrm{ } y_n^p=y_{n-1}, \textrm{ for } n\geq 2 \rangle.$$

Now identifying $\mathbb{Z}[1/p]$ with the irreducible fractions of $\mathbb{R}$ with denominator a power of $p$, the fraction $\frac{1}{p^m}$ corresponds to $y_m$.

Using Seifert-van Kampen theorem, we obtain that a presentation for the group $G$ is given by:
$$\langle x_1, y_1, x_2, y_2, \ldots \textrm{ } | \textrm{ } x_n^n=x_{n-1}, y_n^p=y_{n-1}, x_1=y_1, \textrm{ for } n\geq 2 \rangle.$$

Note that the amalgamation is reflected in the relation $x_1=y_1$.

Consider now the normal subgroup $N$ of $G$ generated by the copy of $\mathbb{Q}$ inside $G$ given by the generators $\{x_i\}$. This group is generated by conjugates of this copy of $\mathbb{Q}$, and therefore $N<S_{\mathbb{Q}}G$. On the other hand, as $G$ is torsion-free, every homomorphism $\phi:\mathbb{Q}\rightarrow G$ is injective, and then $\textrm{Im }\phi$ is isomorphic to $\mathbb{Q}$. Now invoking a result of Moldavanskii (\cite{Mol67}, see also \cite{KS70}, page 229), and taking into account that $\mathbb{Q}\nless\Z [1/p]$, we obtain that $\textrm{Im }\phi<N$ for every $\phi:\mathbb{Q}\rightarrow G$, and this already implies that $N=S_{\mathbb{Q}}G$.

Next we will describe the quotient $G/N$. A presentation of this group is obtained by trivializing the generators $\{x_i\}$ in the previous presentation of $G$ (which in particular implies trivializing $y_1$); so we get (after a renumbering of generators):
$$\langle y_1, y_2, \ldots \textrm{ } | \textrm{ } y_1^p=1, y_n^p=y_{n-1}, \textrm{ for } n\geq 2 \rangle.$$

This is a classical presentation of the quasicyclic group $\mathbb{Z}/p^{\infty}$, being the generator of every $\Z /p^n$ identified with $y_n$. As there exists an epimorphism $\mathbb{Q}\rightarrow \mathbb{Z}/p^{\infty}$, $S_{\mathbb{Q}}G$ is strictly contained in $T_{\mathbb{Q}}G$, as claimed. In fact,  $G=T_{\mathbb{Q}}G$, as the only nontrivial quotients of the quasicyclic group are isomorphic to it.
}
\end{example}

 Using amalgams $\mathbb{Z}[1/q]\hookleftarrow\Z \hookrightarrow \mathbb{Z}[1/p]$, with $q\neq p$, and taking into account that there are always non-trivial homomorphisms $\Z[1/q]\rightarrow \Z /p^{\infty}$, it can be stated in an analogous way that $S_{\mathbb{Z}[1/q]}\neq T_{\mathbb{Z}[1/q]}$ for every prime $q$; and moreover, similar arguments can be used to prove that for a non-trivial set of primes $J$, the co-reflection $S_{\mathbb{Z}[J^{-1}]}$ is never equal to $T_{\mathbb{Z}[J^{-1}]}$. In particular we obtain:

\begin{proposition}
\label{additive}
If $A$ is a non-trivial additive subgroup of $\mathbb{Q}$, $S_A=T_A$ if and only if $A=\mathbb{Z}$.
\end{proposition}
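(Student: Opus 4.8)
The plan is to prove the two implications of the equivalence separately. The ``if'' direction is immediate: if $A$ is trivial, both $S_A$ and $T_A$ are the zero functor; and if $A\cong\Z$, then for every $H$ we have $S_AH=H$ (each element of $H$ is the image of some homomorphism $\Z\to H$) and $T_AH=H$ (the only group $Q$ with $\Hom(\Z,Q)=0$ is $Q=1$). Since $S_A$ and $T_A$ depend only on the isomorphism class of $A$, this covers $A=0$ and every infinite cyclic $A$.

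For the converse, given a nontrivial non-cyclic $A\le\Q$ I will exhibit a single group $N$ with $S_AN\neq T_AN$. After multiplying by a suitable nonzero rational (an automorphism of $(\Q,+)$, which carries $A$ to an isomorphic subgroup) I may assume $\Z\subsetneq A\le\Q$, so $A$ has rank one and is not finitely generated. The first step is an arithmetic lemma: there is a prime $q$ with $\Hom(A,\Z[1/q])=0$, i.e. $A$ does not embed in $\Z[1/q]$. If $A$ is $p$-divisible for some prime $p$, take any $q\neq p$: a nonzero copy of $A$ inside $\Z[1/q]$ would be $p$-divisible, but $\Z[1/q]$ has no nonzero $p$-divisible subgroup. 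If $A$ is $p$-divisible for no prime, then, not being finitely generated, it contains $1/p$ for infinitely many primes $p$; for any prime $q$, an embedding $cA\subseteq\Z[1/q]$ would force $c=a/q^{j}$ (otherwise $c=c\cdot 1$ already has a prime $\neq q$ in its denominator), and then $p\mid a$ for each of the infinitely many primes $p\neq q$ with $1/p\in A$, which is impossible.

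Then I would set $N:=A*_{\Z}\Z[1/q]$, the push-out of $A\hookleftarrow\Z\hookrightarrow\Z[1/q]$ along the standard copies of $\Z$. Since both factors are abelian, $\Z$ is central in $N$ and $\overline N:=N/\Z\cong(A/\Z)*\Z/q^{\infty}$ is a free product; also $N/\langle\langle A\rangle\rangle\cong\Z[1/q]/\Z\cong\Z/q^{\infty}$, where $\langle\langle A\rangle\rangle$ is the normal closure of the canonical copy of $A$. The key claim is that $S_AN=\langle\langle A\rangle\rangle$. One inclusion is clear, since $S_AN$ is normal and contains $A$. For the other I must show that every homomorphism $f\colon A\to N$ has $f(A)\subseteq\langle\langle A\rangle\rangle$. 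Passing to the image $\overline f(A)$ of $f(A)$ in the free product $\overline N$: it is abelian, hence, by the standard structure of abelian subgroups of a free product, is either infinite cyclic or conjugate into a free factor $A/\Z$ or $\Z/q^{\infty}$. It cannot be infinite cyclic, since that would exhibit $\Z$ as a quotient of the rank-one non-cyclic group $A$. If $\overline f(A)$ is conjugate into $\Z/q^{\infty}$, then $f(A)$ lies in a conjugate of $\Z[1/q]$ and $f$ factors through $\Hom(A,\Z[1/q])=0$, so $f(A)=1$. In the remaining cases ($\overline f(A)$ trivial, or conjugate into $A/\Z$) one reads off $f(A)\subseteq\langle\langle A\rangle\rangle$ directly. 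Hence $S_AN=\langle\langle A\rangle\rangle$, and $N/S_AN\cong\Z/q^{\infty}\neq 1$, so $N$ is not $A$-generated.

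Finally I must check $T_AN=N$. By Proposition~\ref{ST} this says $N\in\mathcal{C}_t(A)$, and since $\mathcal{C}_t(A)$ is closed under extensions it suffices that both $S_AN=\langle\langle A\rangle\rangle$ and $N/S_AN\cong\Z/q^{\infty}$ be $A$-generated. The first is a subgroup generated by conjugate copies of $A$, hence a quotient of a free product of copies of $A$; the second is $A$-generated because $A\twoheadrightarrow A/q^{n}A\cong\Z/q^{n}$ for every $n$ when $A$ is not $q$-divisible, and $A\twoheadrightarrow A/\Z\twoheadrightarrow\Z/q^{\infty}$ when it is. Thus $T_AN=N\neq S_AN$, so $S_A\neq T_A$, completing the proof. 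The delicate step is the identification $S_AN=\langle\langle A\rangle\rangle$, which combines the amalgam structure of $N$ with the rigidity of homomorphisms out of a rank-one group; the arithmetic choice of the prime $q$ is the other place where non-cyclicity of $A$ is genuinely used (and for $A=\Q$ one recovers exactly the construction of the preceding example).
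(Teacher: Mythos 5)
Your proof is correct, and it is worth noting how it relates to the paper's: the paper disposes of this proposition in one line by appealing to the preceding discussion (which only treats $A=\mathbb{Q}$ and $A=\mathbb{Z}[J^{-1}]$ explicitly) together with Example 6.2 of \cite{RS01} and \cite{FOW83}, whereas you give a uniform, self-contained argument for an arbitrary non-cyclic rank-one group. Your construction is the natural generalization of the paper's displayed example: you replace the amalgam $\mathbb{Q}\ast_{\mathbb{Z}}\mathbb{Z}[1/p]$ by $A\ast_{\mathbb{Z}}\mathbb{Z}[1/q]$, and the two genuinely new ingredients you supply are (i) the arithmetic lemma producing a prime $q$ with $\Hom(A,\mathbb{Z}[1/q])=0$ (split according to whether $A$ is $p$-divisible for some prime, and using that a non-finitely-generated rank-one group with no divisibility contains $1/p$ for infinitely many $p$), and (ii) the Kurosh-type classification of abelian subgroups of the free product $N/\mathbb{Z}\cong (A/\mathbb{Z})\ast\mathbb{Z}/q^{\infty}$ to pin down $S_AN$ as the normal closure of $A$. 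I checked the delicate points: centrality of the amalgamated $\mathbb{Z}$ (both factors abelian), the exclusion of the infinite-cyclic case (a surjection $A\twoheadrightarrow\mathbb{Z}$ would split and force $A\cong\mathbb{Z}$), and the verification that $\mathbb{Z}/q^{\infty}$ is $A$-generated in both the $q$-divisible and non-$q$-divisible cases, so that $T_AN=N$ while $N/S_AN\cong\mathbb{Z}/q^{\infty}\neq 1$. All of these go through, so your argument actually proves more than the paper records in detail and could stand in place of the citation.
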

\begin{proof}
By work of Beaumont-Zuckerman \cite{BeZu51}, the additive subgroups of $\mathbb{Q}$ are the rational subgroups of rank one. As stated in Example 6.2 of \cite{RS01} (see also \cite{FOW83}), the only non-trivial subgroups $A$ among them for which $S_A$ can be equal to $T_A$ are those of the shape $A=\mathbb{Z}[J^{-1}]$, for a set of primes $J$. But if $J$ is non-empty and $p\in J$, the same argument of Example \ref{QN} above proves that the $\mathbb{Z}[J^{-1}]$-socle of the push-out $\mathbb{Z}[J^{-1}]\hookleftarrow \Z \hookrightarrow \Z [1/p]$ is strictly contained in its $\mathbb{Z}[J^{-1}]$-radical, in turn equal to the whole group. On the other hand, if $J=\emptyset$, every group coincide with its $\mathbb{Z}$-socle, and then $S_{\Z}=T_{\Z}$. So we are done.
\end{proof}

The moral here is that it is not easy to find examples of groups $A$ for which $S_A=T_A$. However, there are at least two ways of constructing such examples. The first family arises in the context of varieties of groups. Consider a set of equations, and the verbal subgroup $W$ defined by these equations in the free group $F_{\infty}$. It is proved in \cite{CRS99} that for every $W$ there exists a locally free group $A$ such that the homomorphisms of $W$ into any group $G$ identify the $W$-perfect radical, and in particular, this fact guarantees that $S_A=T_A$. The inspiring (and first) ``generator" of this kind was the group constructed by Berrick-Casacuberta in  \cite{BC99}, and a closely related group will be the crucial ingredient of Proposition \ref{counterexample} below.

The second family of examples can be constructed as a byproduct of Theorem \ref{maintheorem}, taking account of the following two results:

\begin{proposition}
\label{TASA}
Suppose that for a group $A$ the class $\C(A)$ is closed under extensions, i.e. $\mathcal{C}(A)=\overline{\mathcal{C}(A)}$. Then $\mathcal{C}_q(A)=\mathcal{C}_t(A)$.
\end{proposition}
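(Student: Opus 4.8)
The plan is to avoid proving directly that $\mathcal{C}_q(A)$ is closed under extensions, and instead to reduce the statement to the equality $S_A=T_A$ of co-reflections, which will follow from the hypothesis by inspecting the commutative square \ref{hierarchy2}.

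First I would record the reformulation: by Proposition~\ref{ST} one has $\mathcal{C}_q(A)=\{G:S_AG=G\}$ and $\mathcal{C}_t(A)=\{G:T_AG=G\}$, and the inclusion $\mathcal{C}_q(A)\subseteq\mathcal{C}_t(A)$ always holds. Hence it suffices to show that the canonical monomorphism $S_AG\hookrightarrow T_AG$ is an equality for \emph{every} group $G$. (In fact $\mathcal{C}_q(A)=\mathcal{C}_t(A)$ is \emph{equivalent} to $S_A=T_A$, since every homomorphism $A\to G$ has image inside $T_AG$ and therefore $S_A(T_AG)=S_AG$; but only the implication $S_A=T_A\Rightarrow\mathcal{C}_q(A)=\mathcal{C}_t(A)$ is needed here.)

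Next I would feed in the hypothesis. The assumption $\mathcal{C}(A)=\overline{\mathcal{C}(A)}$ means that these two full subcategories of the category of groups coincide. Now $C_A$ is a co-reflection onto $\mathcal{C}(A)$, while by Corollary~\ref{corocorre} the functor $D_A=C_B$ is a co-reflection onto $\overline{\mathcal{C}(A)}$; since the co-reflection onto a fixed co-reflective subcategory is unique up to natural isomorphism, there is a natural isomorphism $C_A\cong D_A$ over the identity functor. In particular, for every group $G$ the left-hand vertical map $C_AG\to D_AG$ of diagram \ref{hierarchy2} is an isomorphism. Chasing that square then gives the claim: the composite $C_AG\twoheadrightarrow S_AG\hookrightarrow T_AG$ equals the composite $C_AG\stackrel{\cong}{\to}D_AG\twoheadrightarrow T_AG$ — the surjectivity of $D_AG\to T_AG$ being one of the facts recorded just before diagram \ref{hierarchy2} — and is therefore surjective. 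Consequently the monomorphism $S_AG\hookrightarrow T_AG$ is surjective, i.e. $S_AG=T_AG$. As $G$ was arbitrary, $S_A=T_A$, and the first step completes the proof.

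I do not expect any serious computation; the only real pitfall is the tempting but unsuccessful alternative of establishing closure of $\mathcal{C}_q(A)$ under extensions by hand. Given an extension $1\to G_1\to G\to G_2\to 1$ with $G_1,G_2$ $A$-generated, one can replace $G_2$ by a free product of copies of $A$ by pulling back, but one cannot in general replace the kernel $G_1$ by a free product of copies of $A$, because extensions do not lift along quotients of the kernel, so that route stalls. The point of the argument above is precisely that equality of the two \emph{classes} is exactly the statement that their co-reflections agree, and diagram \ref{hierarchy2} already packages the compatibility among $C_A$, $D_A$, $S_A$ and $T_A$ needed to conclude; so the ``hard part'' is really just choosing this viewpoint rather than fighting with extensions.
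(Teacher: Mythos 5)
Your argument is correct and is essentially the proof given in the paper: the hypothesis forces $C_AG\cong D_AG$ by uniqueness of co-reflections, and then the commutative square \ref{hierarchy2} together with the surjectivity of $D_AG\twoheadrightarrow T_AG$ shows that the monomorphism $S_AG\hookrightarrow T_AG$ is also an epimorphism, hence an isomorphism for every $G$, which by Proposition \ref{ST} yields $\mathcal{C}_q(A)=\mathcal{C}_t(A)$. No substantive difference from the paper's proof.
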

\begin{proof} In this case, for every group $G$, $C_AG=D_AG$, and then there is a diagram
$$
\xymatrix{D_AG \ar@{>>}[r] \ar[d]^{Id} & S_AG  \ar[d]^{j} \\
D_AG \ar@{>>}[r] &  T_AG }
$$
As $j$ should be an epimorphism, and it is always a monomorphism, it is an isomorphism.
\end{proof}

\begin{corollary}

Let $A$ be a group. Then for any cellular generator $B$ of the class $\overline{\mathcal{C}(A)}$ we have $S_B=T_B$.

\end{corollary}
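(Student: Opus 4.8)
The plan is to apply Proposition~\ref{TASA} directly, once we verify its hypothesis for the group $B$ in place of $A$. So the key observation is: a cellular generator $B$ of $\overline{\mathcal{C}(A)}$ has the property that $\mathcal{C}(B)=\overline{\mathcal{C}(B)}$, i.e. the cellular class generated by $B$ is already closed under extensions. Once this is established, Proposition~\ref{TASA} (applied with $A$ replaced by $B$) immediately yields $\mathcal{C}_q(B)=\mathcal{C}_t(B)$, and its proof shows that the monomorphism $S_B G\hookrightarrow T_B G$ is an epimorphism for every $G$, hence an isomorphism; that is, $S_B=T_B$ as co-reflections.

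First I would recall that, by definition of cellular generator, $\mathcal{C}(B)=\overline{\mathcal{C}(A)}$. Now $\overline{\mathcal{C}(A)}$ is closed under extensions --- indeed it is by construction the smallest class containing $A$ that is closed under direct limits and extensions. Therefore $\mathcal{C}(B)=\overline{\mathcal{C}(A)}$ is closed under extensions. On the other hand, $\overline{\mathcal{C}(B)}$ is the smallest class containing $B$ closed under direct limits and extensions; since $\mathcal{C}(B)$ already contains $B$ and is closed under both operations, we get $\overline{\mathcal{C}(B)}\subseteq\mathcal{C}(B)$, while the reverse inclusion is automatic. Hence $\mathcal{C}(B)=\overline{\mathcal{C}(B)}$, which is precisely the hypothesis of Proposition~\ref{TASA} for the group $B$.

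With that in hand, Proposition~\ref{TASA} gives $\mathcal{C}_q(B)=\mathcal{C}_t(B)$. To extract the statement $S_B=T_B$, I would run the argument in the proof of Proposition~\ref{TASA} verbatim with $A$ replaced by $B$: since $C_B=D_B$ (because $\mathcal{C}(B)=\overline{\mathcal{C}(B)}$ forces the two co-reflections to coincide), the commutative square relating $D_B G\twoheadrightarrow S_B G$ and $D_B G\twoheadrightarrow T_B G$ shows that the canonical monomorphism $j\colon S_B G\hookrightarrow T_B G$ is also an epimorphism, hence an isomorphism, for every group $G$. Naturality of $j$ then gives the equality of functors $S_B=T_B$.

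There is essentially no obstacle here: the corollary is a formal consequence of Proposition~\ref{TASA} together with the tautology that the cellular class of a cellular generator of $\overline{\mathcal{C}(A)}$ is closed under extensions. The only point requiring a moment's care is the identification $C_B=D_B$, i.e. that the cellular-cover co-reflection for $\mathcal{C}(B)$ agrees with the $D$-type co-reflection for $\overline{\mathcal{C}(B)}$; but this is immediate from uniqueness of co-reflections once one knows $\mathcal{C}(B)=\overline{\mathcal{C}(B)}$, as already noted in the discussion following Corollary~\ref{corocorre}.
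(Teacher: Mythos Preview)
Your proposal is correct and is exactly the argument the paper intends: the corollary is stated immediately after Proposition~\ref{TASA} with no separate proof, precisely because it follows by applying that proposition to $B$ once one notes that $\mathcal{C}(B)=\overline{\mathcal{C}(A)}$ is closed under extensions, hence $\mathcal{C}(B)=\overline{\mathcal{C}(B)}$. Your extra care in deducing $S_B=T_B$ from $\mathcal{C}_q(B)=\mathcal{C}_t(B)$ via the diagram is fine, though one could equally invoke uniqueness of co-reflections directly.
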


Recall that by Theorem \ref{maintheorem}, it is always possible to construct such $B$ out of any given $A$, and this provides many examples of (big) groups for which $S_A=T_A$. It is natural and interesting to ask if there is a converse of the previous proposition, so we consider the following problem:

\begin{question}
\label{SoT}
If $T_A = S_A$, for a certain group $A$, is it true that $\mathcal{C}(A)=\overline{\mathcal{C}(A)}$?
\end{question}

We will see in next section that the general answer is negative. Before, we will discuss in which cases such an answer can be positive. The following result is easy to prove and gives a sufficient condition.

\begin{proposition}
Let $A$ be a group, and assume that $\mathcal{C}(A)=\mathcal{C}_q(A)$ and $S_A=T_A$. Then $\mathcal{C}(A)=\overline{\mathcal{C}(A)}$.
\end{proposition}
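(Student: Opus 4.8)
The plan is to chase the hierarchy diagram \eqref{hierarchy} using the characterizations of the classes by their co-reflections from Proposition~\ref{ST}. The hypotheses translate directly into statements about co-reflections: $\mathcal{C}(A)=\mathcal{C}_q(A)$ says that $C_A=S_A$ as functors, and $T_A=S_A$ is given outright; combining them yields $C_A=S_A=T_A$ on the whole category of groups. (Here I read ``$T_A=S_A$'' as the equality $S_AG=T_AG$ for every group $G$, which is meaningful since $S_AG\subseteq T_AG$ always.)

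The main step is to show $\overline{\mathcal{C}(A)}\subseteq\mathcal{C}(A)$, the reverse inclusion being the top horizontal arrow of \eqref{hierarchy}. First I would take an arbitrary $G\in\overline{\mathcal{C}(A)}$. By the right-hand vertical inclusion $\overline{\mathcal{C}(A)}\hookrightarrow\mathcal{C}_t(A)$ of \eqref{hierarchy}, the group $G$ is $A$-constructible, so Proposition~\ref{ST} gives $T_AG=G$. Using $T_A=S_A$ we obtain $S_AG=G$, whence by the first statement of Proposition~\ref{ST} the group $G$ is $A$-generated, i.e. $G\in\mathcal{C}_q(A)$. Since $\mathcal{C}_q(A)=\mathcal{C}(A)$ by assumption, $G\in\mathcal{C}(A)$, and we are done. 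Equivalently, and without passing through individual groups, one can invoke that a co-reflective subclass is determined by its co-reflection: the equality $C_A=T_A$ forces $\mathcal{C}(A)=\mathcal{C}_t(A)$, and then $\mathcal{C}(A)\subseteq\overline{\mathcal{C}(A)}\subseteq\mathcal{C}_t(A)=\mathcal{C}(A)$ squeezes the acyclic class between two copies of the cellular class.

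I do not anticipate any real obstacle here: all the substantive content has already been isolated in Proposition~\ref{ST}, which identifies the $A$-generated and $A$-constructible groups as the fixed points of $S_A$ and $T_A$ respectively, and in the hierarchy \eqref{hierarchy}. The only point requiring a line of care is the interpretation of the hypothesis $T_A=S_A$ as an equality of functors (equivalently, of the associated co-reflective subcategories $\mathcal{C}_q(A)$ and $\mathcal{C}_t(A)$); with that reading the argument above is immediate.
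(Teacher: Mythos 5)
Your proposal is correct and follows essentially the same route as the paper: the paper's proof likewise observes that $T_A=S_A$ forces $\mathcal{C}_q(A)=\mathcal{C}_t(A)$, hence $\mathcal{C}(A)=\mathcal{C}_t(A)$ by hypothesis, and then squeezes $\overline{\mathcal{C}(A)}$ via the standing inclusions $\mathcal{C}(A)\subseteq\overline{\mathcal{C}(A)}\subseteq\mathcal{C}_t(A)$. Your element-by-element chase through Proposition~\ref{ST} is just an unwinding of the same argument.
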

\begin{proof} As the members of the classes are defined precisely as the groups for which the corresponding co-reflection is the identity, $S_A=T_A$ implies $\mathcal{C}_q(A)=\mathcal{C}_t(A)$, and then by hypothesis $\mathcal{C}(A)=\mathcal{C}_t(A)$. But the inclusions $\mathcal{C}(A)\subseteq \overline{\mathcal{C}(A)} \subseteq \mathcal{C}_t(A)$ always hold, so the result follows.
\end{proof}

It can be seen that the condition of the proposition holds for the universal group of Berrick-Casacuberta. However, the following example shows that the hypotheses of the previous proposition are not necessary in general for cellular generators of closures under extensions:

\begin{example}
{\rm
Given a prime $p$, consider a cellular generator $B$ of the class $\overline{\mathcal{C}(\mathbb{Z}/p)}$. If $\mathcal{C}(B)=\mathcal{C}_q(B)$, this would imply that $C_B=S_B$, and equivalently $D_{\mathbb{Z}/p}=T_{\mathbb{Z}/p}$. But this is not true in general, as can be deduced for instance from Example \ref{alldiferent}.
Observe that we obtain in particular that $\mathcal{C}(B)=\overline{\mathcal{C}(B)}\neq \mathcal{C}_q(B)=\mathcal{C}_t(B)$ in this case.
}
\end{example}

In general, the relation between the group co-reflections $C_A$ and $C_B$ ($=D_A$) has been investigated when $H_2A=0$, and it is quite well understood when $A$ is a cyclic group or a subring of $\mathbb{Q}$ (see for example \cite{RS01}). As seen above, the latter is not useful in our context, as $S_A\neq T_A$ for these groups. However, in next section, we will take profit of some features of the case $H_2A=0$ to describe a counterexample for Question \ref{SoT}. We emphasize that in this proof we will not need the results of Section \ref{generating}.

\section{The Burnside radical and a free Burnside group}
\label{Burnside}

 Recall from \cite{CRS99} that the \emph{Burnside idempotent radical} for a prime $p$ of a group $G$ is the largest subgroup $H<G$ which is generated by $p$-powers of its elements. As it is established in (\cite{CRS99}, Theorem 3.3), given any group $G$ and a prime $p$, its corresponding Burnside radical can be always generated by the images of homomorphisms of a group $F$, called the \emph{generator} of the radical. This group will be the key object in the proof of Proposition \ref{counterexample} below, so we will compile next some properties of it that will be useful in the sequel, in particular in the homology computations of Lemma \ref{homology}.

\begin{proposition}

The following statements hold for the group $F$:

\begin{enumerate}

\item $F$ generates the Burnside radical.

\item $F$ is an infinite free product of groups $F_{\mathbf{n}}$, where  $\mathbf{n}$ runs over all the non-decreasing sequences of integers.

\item The groups $F_{\mathbf{n}}$ are countable.

\item Every $F_{\mathbf{n}}$ is a directed system of finitely generated free groups $\{F_{\mathbf{n},r}\}$, and in particular is locally free and torsion-free.

\item For every $\mathbf{n}$ and $r<s$, $\textrm{rank }F_{\mathbf{n},r}<\textrm{rank }F_{\mathbf{n},s}$.

\item The image of every bonding homomorphism $\phi_r:F_{\mathbf{n},r}\rightarrow F_{\mathbf{n},r+1}$ is always contained in a product of $p$-powers of generators of $F_{\mathbf{n},r+1}$.

\item For every $\mathbf{n}$ and $r$, there exist distinguished systems $\{x_{{\mathbf{n}},r}\}$ of generators of $F_{\mathbf{n},r}$, such that for every $i\neq j$, $\phi_r(x_{\mathbf{i},r})$ and $\phi_r(x_{\mathbf{j},r})$ are spanned by disjoint subsets of the system $\{x_{\mathbf{n},r+1}\}$.

\item $F$ is locally free and torsion-free.
\end{enumerate}
\end{proposition}

\begin{proof}
Statement 1) is exactly Theorem 3.3 in \cite{CRS99}, and the remaining ones are established in the course of the proof of this result. We adopt the notation from there. The definition of $F$ as a free product of 2) appears in the second paragraph of that proof. Just after it, the construction of $F_{\mathbf{n}}$ as a directed system of free groups $F_{\mathbf{n},r}$ is undertaken, with a precise description of the generator of each group and the bonding homomorphisms $\phi$. The fact that the groups $F_{\mathbf{n},r}$ are finitely generated established there implies 3) and the first part of 4), and the construction as a directed system of free groups gives locally freeness and torsion-freeness of $F_{\mathbf{n}}$ (rest of statement 4). The indexation of the generators in the second paragraph implies the inequalities of 5), and the fact that in this case the only non-trivial word that defines the variety is $w=x^p$ gives 6), by the definition of the product in that paragraph. As all the symbols that appear in the definition of $x_r$ appear again in the generators that span $\phi_r(x_r)$ we have 7). Finally, 8) is a straight consequence of 4), as being locally free is closed under free products.
\end{proof}

Now we are ready to compute the ordinary homology of $F$.

\begin{lemma}
\label{homology}

The first homology group $H_1(F)$ is a free module over $\Zup$ of infinite rank, and $H_iF=0$ if $i>1$.

\end{lemma}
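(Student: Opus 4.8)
The plan is to read off $H_*(A)$ from the presentation of $A$ as a colimit of free groups, using the fact that integral group homology commutes with directed (filtered) colimits. Writing $A=\colim_n F_n$ with each $F_n$ free and the structure maps the inclusions of the directed system, one gets $H_k(A)\cong\colim_n H_k(F_n)$ for every $k$. This settles the second assertion at once: a free group has cohomological dimension $\le 1$, so $H_k(F_n)=0$ for all $k\ge 2$ and all $n$, whence $H_2A=\colim_n H_2(F_n)=0$ (and in fact $H_kA=0$ for every $k\ge 2$).

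For the first assertion I would unwind the explicit construction of $A$ in \cite[Theorem 3.3]{CRS99}. There $A$ is obtained as the colimit of a chain $F_0\to F_1\to\cdots$ of free groups in which each free generator of $F_n$ is carried to a $p$-th power of a free generator of $F_{n+1}$; this is precisely the device that makes $A$ a $W$-perfect group for the verbal subgroup $W(G)=G^p$, i.e.\ the generator of the Burnside idempotent radical for $p$. Applying $H_1(-)=(-)^{ab}$, each $F_n^{ab}$ is free abelian on a fixed countable set $I$, and the induced map $H_1(F_n)\to H_1(F_{n+1})$ is, in the matched bases, multiplication by $p$ on each basis element. Therefore
$$
H_1(A)\;\cong\;\colim\Big(\Z^{(I)}\xrightarrow{\,\cdot p\,}\Z^{(I)}\xrightarrow{\,\cdot p\,}\Z^{(I)}\xrightarrow{\,\cdot p\,}\cdots\Big)\;\cong\;\Big(\colim\big(\Z\xrightarrow{\,\cdot p\,}\Z\xrightarrow{\,\cdot p\,}\cdots\big)\Big)^{(I)}\;\cong\;\Zup^{(I)},
$$
a free $\Zup$-module of countable rank, which is the claim.

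The step that needs genuine care is the middle one: extracting from the construction of \cite{CRS99} that the transition maps of the directed system really do act on first homology as multiplication by $p$ (at worst after an obvious change of basis and passage to a direct summand), rather than as some less transparent integer matrix that could introduce divisibility by other primes. It is worth noting why no softer argument suffices: the bare facts that $A^{ab}$ is countable, torsion-free (kernels of homomorphisms into torsion-free groups are pure and the $F_n^{ab}$ are free abelian) and $p$-divisible (since $A=A^p$ forces $A^{ab}=pA^{ab}$) do not by themselves force $A^{ab}$ to be $\Zup$-free, as the example of $\Q$ shows. The explicit shape of the colimit is thus essential, and checking it against \cite{CRS99} is the only real work.
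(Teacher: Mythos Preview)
Your argument is the paper's: compute $H_*(A)$ as the colimit of $H_*(F_n)$, getting $H_2A=0$ from freeness of the $F_n$ and $H_1(A)\cong\Zup^{(\aleph_0)}$ from the abelianized transition maps being multiplication by $p$. One small correction against \cite{CRS99} (and the paper's own reading of it): the free groups $F_n$ in that construction have \emph{finite, strictly increasing} rank rather than a fixed countable basis, so the abelianized system is $\Z\to\bigoplus_{I_1}\Z\to\bigoplus_{I_2}\Z\to\cdots$ with each map multiplication by $p$ on the existing coordinates and inclusion on the new ones---but the colimit is the same $\Zup^{(\aleph_0)}$ either way, and you already flagged this verification as the one point needing care.
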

\begin{proof}

We first compute the homology of every $F_{\mathbf{n}}$. As these groups are filtered colimits of free groups, $H_iF_{\mathbf{n}}=0$ if $i>1$. Moreover, the construction of $F_{\mathbf{n}}$ in the mentioned paper as a telescope of homomorphisms that sends generators to products of $p$-powers (see statement 6 of the previous proposition) implies that the first homology group is the direct limit of a system $$\Z\rightarrow \bigoplus_{I_1} \Z\rightarrow \bigoplus_{I_2} \Z\rightarrow \ldots,$$ where:
\begin{itemize}
\item Every direct sum is taken over a finite set of indexes (statement 4 above).
\item If $j<k$, then $I_j<I_k$ (statement 5).
\item For every $k$, the homomorphism $\bigoplus_{I_{k}} \Z\rightarrow \bigoplus_{I_{k+1}}\Z$ takes the $|I_k|$ components of the left group to the first $|I_k|$ of the right group in their order, and the homomorphism is, up to a change of base, multiplication by $p$ (statements 6 and 7).
\end{itemize}

Hence, the direct system that defines $H_1{F_{\mathbf{n}}}$ is a countable direct sum of copies of the system $\mathbb{Z}\stackrel{p}{\rightarrow} \Z \stackrel{p}{\rightarrow}\mathbb{Z}\stackrel{p}{\rightarrow}\ldots$ and then this homology group is a free module over $\mathbb{Z} [1/p]$.
Finally, writing $F$ as the colimit (under the inclusion) of all the subgroups of the shape $F_{\mathbf{i_1}}\ast\ldots \ast F_{\mathbf{i_n}}$ and taking the corresponding limit in homology, the result follows. Note that the rank of $H_1(F)$ as a free module over $\Zup$ is uncountable.
\end{proof}

Now given a prime $q>10^{75}$, $q\neq p$, consider the \emph{free Burnside group} $B(2,q)$ in two generators. It is a group of exponent $q$, and by Corollary 31.2 in \cite{Ol91} the bound $q>10^{75}$ implies that its second homology group is free abelian in a countable number of generators. In the sequel we will denote $B(2,q)$ by $G$.

\begin{proposition}
\label{counterexample}
For the group $F$ above defined, the equality $S_F=T_F$ holds, but $C_F\neq D_F$.
\end{proposition}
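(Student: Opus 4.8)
The equality $S_A = T_A$ needs no new argument: by construction $A$ is a locally free generator of the Burnside idempotent radical \cite[Theorem 3.3]{CRS99}, and, as recalled in Section~4.2, for such a generator the $A$-socle and the $A$-radical coincide on every group. So the whole content is the inequality $D_A \neq C_A$, and the plan is to exhibit one group on which the two co-reflections disagree, namely $G = F = B(2,q)$.

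First I would observe that $S_A F = T_A F = F$. Since $A_{ab} = H_1(A)$ is a free $\Zup$-module it admits an epimorphism onto $\Zup$, and as $\gcd(p,q) = 1$ there is an epimorphism $\Zup \twoheadrightarrow \Z/q$; composing, each cyclic subgroup $\langle x \rangle \cong \Z/q$ of $F$ (recall $F$ has exponent $q$) is the image of some homomorphism $A \to F$. Hence $S_A F$ contains every cyclic subgroup of $F$, so $S_A F = F$ and a fortiori $T_A F = F$. Using $C_A = C_A S_A$ and $D_A = D_A T_A$, both $C_A F$ and $D_A F$ are then central extensions of $F$, and diagram~\eqref{hierarchy2} becomes a diagram of central extensions
$$\xymatrix{K \ar@{^{(}->}[r] \ar[d] & C_AF \ar@{>>}[r] \ar[d] & F \ar@{=}[d] \\ L \ar@{^{(}->}[r] & D_AF \ar@{>>}[r] & F}$$
with $K = \Ker(C_A F \to F)$ and $L = \Ker(D_A F \to F)$; so $D_A F = C_A F$ if and only if these two central extensions agree, and in particular only if $K \cong L$.

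The heart of the matter is then the comparison of $K$ and $L$, for which I would bring in homology. On the one hand $A$, being a directed colimit of finitely generated free groups, is the fundamental group of a two-dimensional $K(A,1)$ (a mapping telescope of graphs), and this is a Moore space since $H_2 A = 0$; hence Theorem~\ref{DAH} applies to $A$, and $H_1(A) = \Zup^{(\omega)}$ is a countable free $\Zup$-module. On the other hand, by Corollary~31.2 of \cite{Ol91}, $H_2 F = H_2 B(2,q)$ is free abelian of countably infinite rank. Now apply Theorem~\ref{CAH} to $C_A F$ (using $F = S_A F$) and Theorem~\ref{DAH} to $D_A F$ (using $F = T_A F$), observing that $H_2 A = 0$ reduces $H^2(A;-)$ to $\Ext(A_{ab},-)$: by the analysis of the case $H_2 A = 0$ in \cite{RS01} (compare \cite[Example 4.6]{RS01} and Example~\ref{alldiferent}), the kernel $K$ is obtained from $H_2 F$ by factoring out the $A_{ab}$-radical, while $L$ is a quotient of $H_2 F$ on which $\Hom(A_{ab},-)$ \emph{and} $\Ext(A_{ab},-)$ both vanish. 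Since a coherent $p$-divisible sequence in a free abelian group is zero, $\Hom(A_{ab}, H_2 F) = 0$ and the $A_{ab}$-radical of $H_2 F$ is trivial, so $K \cong H_2 F$ is free abelian of countable rank. But $\Ext(A_{ab}, H_2 F) \neq 0$: indeed $\Zup$ is a direct summand of $A_{ab}$ and $\Z$ is a direct summand of $H_2 F$, so $\Ext(A_{ab}, H_2 F)$ contains $\Ext(\Zup, \Z) \cong \widehat{\Z}_p / \Z$, the $p$-adic integers modulo $\Z$, which is non-zero. Hence $K$ cannot be a kernel of the co-reflection $D_A$, so $K \not\cong L$; therefore $D_A F \neq C_A F$, which together with the first paragraph gives $S_A = T_A$ and $D_A \neq C_A$, and in particular answers Question~\ref{SoT} in the negative.

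The step that demands care is the identification $K \cong H_2 F$: one must transport the structure of $C_A$ in the case $H_2 A = 0$ from \cite{RS01} and, in particular, check that the canonical central extension of $F$ by $H_2 F$ satisfies the hypotheses of Theorem~\ref{CAH}. This is precisely where $H_2 A = 0$ is essential, since it forces the pullback of that extension along every homomorphism $A \to F$ to be cohomologically trivial, so the extension is admissible for $C_A$ even though its kernel $H_2 F$, having non-zero $\Ext$ from $A_{ab}$, is not admissible for $D_A$. Alternatively one may run the same comparison homotopically, using Chach\'olski's description of $CW_{M(A,1)}$ and the description of $D_A$ in Theorem~\ref{DAH}. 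The remaining ingredients --- the equality $S_A B(2,q) = B(2,q)$, the computations $H_1 A = \Zup^{(\omega)}$, $H_2 A = 0$ and $H_2 B(2,q)$ free abelian of countable rank, and the elementary $\Hom$/$\Ext$ computations over $\Zup$ --- are routine.
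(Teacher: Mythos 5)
The first half of your argument is fine: $S_A=T_A$ because $A$ generates an idempotent radical of a variety, and your observation that every cyclic subgroup of $F=B(2,q)$ is an image of $A$ (via $A\twoheadrightarrow A_{ab}\twoheadrightarrow\Zup\twoheadrightarrow\Z/q$) is a clean elementary proof that $S_AF=T_AF=F$. The reduction of $D_AF\neq C_AF$ to distinguishing the two central kernels $K$ and $L$ is also sound. The gap is in your identification $K\cong H_2F$. The formula ``kernel $=H_2N$ modulo the $A_{ab}$-radical'' that you import from \cite[Example 4.6]{RS01} is established there for \emph{finite} $N$ and does not apply to $F$. What Chach\'olski's cofibration actually gives is $K\cong \pi_2\Cof/T_{A_{ab}}(\pi_2\Cof)$, where $\Cof$ is the cofibre of the evaluation $\bigvee_{[M,K(F,1)]}M\to K(F,1)$ and $\pi_2\Cof$ sits in an extension
$$H_2F\hookrightarrow \pi_2\Cof\twoheadrightarrow \textstyle\bigoplus\Zup ,$$
the right-hand term being the kernel of $\bigoplus_{\Hom(A,F)}A_{ab}\to F_{ab}$, a free $\Zup$-module of countably infinite rank (the index set $\Hom(A,F)$ is infinite and $F_{ab}=(\Z/q)^2$). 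So $K$ is a quotient of this extension, not of $H_2F$, and whether $K$ is free abelian depends on whether the extension splits --- which is precisely what the paper's closing remark says is unknown. Consequently your key step $\Ext(A_{ab},K)\neq 0$ is unsupported; nor can it be rescued by ``$K$ is countable and nonzero'', since e.g.\ finite abelian $p$-groups also satisfy $\Ext(\Zup,-)=0$. (Your parallel claim that $L$ is a quotient of $H_2F$ is likewise false --- $L$ turns out to be uncountable --- but harmless, since you only use $\Ext(A_{ab},L)=0$ from Theorem \ref{DAH}.)

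The paper avoids this trap by computing the other kernel explicitly: $L=\pi_2 P_M C$ is the $\Ext$-$p$-completion of $\pi_2\Cof$, which via the extension above is $\bigoplus_{\aleph_0}\Z^{\wedge}_p$, an \emph{uncountable} group; since $K$ is a quotient of the countable group $\pi_2\Cof$, the two kernels are distinguished by cardinality alone, with no need to decide the structure of $K$. If you want to keep your $\Ext$-based comparison, you must either prove the displayed extension splits (so that $T_{\Zup}\pi_2\Cof$ surjects onto $\bigoplus\Zup$ and $K\cong H_2F$), or otherwise establish $\Ext(A_{ab},K)\neq 0$ directly; as it stands, the argument does not close.
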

\begin{proof}
As $F$ generates the idempotent radical associated to a variety of groups, $S_F=T_F$. We will check now that $C_F\neq D_F$. As $G$ has exponent $q$ and $q$ is coprime to $p$, multiplication by $p$ is an isomorphism. Then $G$ is equal to its idempotent Burnside radical, and then $G=S_FG=T_FG$. Hence, to prove that $C_FG\neq D_FG$ it is enough to check that the kernels of $D_FG\rightarrow G$ and $C_FG\rightarrow G$ are not isomorphic.

 We first compute the kernel of the epimorphism $D_FG\rightarrow G$. Since $F$ is locally free, there exists a 2-dimensional Moore space $M(F,1)$ and therefore, we can use Theorem \ref{DAH} directly to compute such kernel. Consider the Chach\'olski cofibration  \begin{equation}
\label{cofib}
 \bigvee_{[M,K(G,1)]_*} M\rightarrow K(G,1)\to Y,
 \end{equation}
  where $Y$ denotes the homotopy cofiber. We proved just after the mentioned theorem that the kernel of $D_FG\rightarrow G$ is isomorphic to $\pi_2(P_MY)$, which in turn is the Ext-$p$-completion of the group $\pi_2Y$ in the primes for which $F_{\rm ab}$ is uniquely $p$-divisible (\cite{Bou97}, Theorem 7.5). By Lemma \ref{homology}, $F_{\rm ab}=H_1F$ is an $\Zup$-free module, and hence we only need to complete in the prime $p$. Applying Mayer-Vietoris to (\ref{cofib}), and the fact that $\pi(Y)=H_2(Y)$, as $Y$ is simply-connected, we obtain an extension
\begin{equation}
\label{extension}
 A \hookrightarrow \pi_2Y \stackrel{f}{\twoheadrightarrow} R.
 \end{equation}
 Here $A=H_2(G)$ is free abelian in a countable number of generators, and $R$ is the kernel of the homomorphism $H_1(\bigvee_{[M,K(G,1)]_*} M)\rightarrow H_1K(G,1)$. In this case $R=\bigoplus_J \Zup$ is a free $\Zup$-module, for some set $J$ of indexes, because $H_1(\bigvee_{[M,K(G,1)]_*} M)$ is so and $\Zup$ is a principal ideal domain. Taking now account of the fact that $\textrm{Hom}(\Z /p^{\infty},\Z)=0$ and (\cite{BK72},VI.2.5), we obtain another extension:
$$\textrm{Ext}(\Z /p^{\infty},A)\hookrightarrow \textrm{Ext}(\Z/p^{\infty},\pi_2Y)\twoheadrightarrow \textrm{Ext}(\Z /p^{\infty},R).$$
 As the Ext-$p$-completion of a free $\Zup$-module is trivial, the term in the right is so, and the homomorphism in the left is an isomorphism. Now observe that again by (\cite{BK72},VI.2.5), the extension $\Z\hookrightarrow A\twoheadrightarrow A$ given by the inclusion of a factor induces another extension $$\textrm{Ext}(\Z /p^{\infty},\Z)\hookrightarrow \textrm{Ext}(\Z /p^{\infty},A)\twoheadrightarrow \textrm{Ext}(\Z /p^{\infty},A).$$ As $\textrm{Ext}(\Z /p^{\infty},\Z)=\Z^{\wedge}_p$
 we obtain that the kernel of $D_FG\rightarrow G$ contains at least a copy of the $p$-adic integers.

Next we will check that $C_FG\neq D_FG$, and in order to do, we will prove that the kernel of $C_FG\rightarrow G$ cannot contain a subgroup isomorphic to $\Z^{\wedge}_p$ . Again from the argument after Theorem~\ref{DAH} we have that this kernel is isomorphic to $\pi_2(P_{\Sigma M}Y)$, and this
is the quotient of $\pi_2Y$ by its $F$-radical $T$, which is in turn equal to the $\Zup$-radical as $\pi_2Y$ is abelian (\cite{CRS99}, Section 3).

Let us describe $T$ and $(\pi_2Y)/T$ more explicitly.
For shortness, denote by $S=S_{\Zup}\pi_2Y$, the ${\Zup}$-socle of $\pi_2Y$.

Consider all the rank 1 $\Z [1/p]$-submodules of $R$ such that the inclusion $\Z [1/p]\hookrightarrow R$ lifts to $\pi_2Y.$ After a change of basis, we may assume that there is a set $I<J$ (that can be empty) such that a) the module $R$ decomposes as a direct sum $R=R_I\oplus R_{J\setminus I}$, being $R_I=\bigoplus_I \Zup$ and $R_{J\setminus I}=\bigoplus_{J\setminus I} \Zup$, and b) every homomorphism $\Z [1/p]\rightarrow R_I$ lifts to $\pi_2Y$.

 Next we will prove $S\cong R_I$, via the restriction of $f$ in (\ref{extension}). First, observe that the elements of $S$ are precisely the elements of $\pi_2Y$ that are uniquely $p$-divisible, and every such element is in the image of some homomorphism $\Zup\rightarrow\pi_2Y$. Now for every homomorphism $g:\Zup\rightarrow \pi_2Y$ the composition $f\circ g$ is again nontrivial: see for example \cite[3F.11]{Hat02}, taking into account that $A$ does not contain $p$-divisible elements. In particular, $g$ is a lifting of a homomorphism $\Zup\rightarrow R_{I}$. Hence, the restriction of $f$ to the socle takes values on $R_I$. Moreover, if $x\in R_I$, $x$ is in the image of some homomorphism $\Zup\rightarrow R_I$, so lifting this homomorphism we obtain a preimage of $x$ in the socle, and $f|_{S}$ is then surjective.  To check injectivity, observe that fixed a homomorphism $\Zup\rightarrow R_I$, the lifting to the socle is unique, because again by \cite[3F.11]{Hat02} the induced homomorphism $\textrm{Hom}(\Zup,\pi_2Y)\rightarrow \textrm{Hom}(\Zup, R_I)$ is injective. Then, the only homomorphism $\Zup\rightarrow S$ that lifts the trivial one is trivial, and then $f(x)=0$ implies $x=0$ for $x\in S$. So we have proved that $f$ maps isomorphically the socle $S$ onto $R_I$. Now we have a diagram, where rows and columns are exact:
$$\xymatrix{ \{1\} \ar[r] \ar[d] &  S \ar[r]^{\simeq} \ar[d] & R_I \ar[d] \\
A \ar[r] \ar[d]^{\simeq} & \pi_2Y \ar[r]_f \ar[d] &  R \ar[d] \\
A \ar[r] & (\pi_2Y)/S \ar[r] & R_{J\setminus I}. \\
}$$

Observe that $(\pi_2Y)/S$ has no uniquely $p$-divisible elements; otherwise, some homomorphism $\Zup\rightarrow R_{J\setminus I}$ could be lifted to that group, and this is not possible by the definition of $I$. This implies that $S=T$. Now it is immediate that $(\pi_2Y)/T$ cannot contain a copy of $\Z^{\wedge}_p$, taking account for example that the $p$-adic integers admit non-trivial homomorphisms from $\Z[1/q]$ ($q$ a prime different from $p$) and $(\pi_2Y)/T$ does not. Hence $D_FG\neq C_FG$ and we are done.
\end{proof}

\begin{remark}
{\rm In the previous proof, we do not know if $(\pi_2Y)/T$ is a free abelian group, because we are unable to show if the aforementioned extension that defines $\pi_2Y$ splits or not. Recall that $\textrm{Ext}(\Zup,\Z)=\Z^{\wedge}_p/\Z$ is uncountable, so there is an uncountable number of abelian extensions of $\Zup$ by $\Z$, but only the trivial extension of $\Z$ by $\Zup$ splits. }
\end{remark}

We conclude by pointing out that the arguments of Proposition \ref{counterexample} are also valid when approaching $C_F$ and $D_F$ for the perfect groups defined by Berrick-Miller in \cite{BeMi92}, for which the Schur multiplier is free abelian. Moreover, \emph{mutatis mutandis}, they may also be used to describe the $\Z /p$-acyclic cover of these Burnside groups of high exponent. This is related with recent results about cellular covers of Burnside groups (\cite{Go12}, \cite{HPR17} \cite{FlMu19}, \cite{FlSc18}), that in particular have been useful to solve some conjectures by Farjoun \cite{Ch08}. On the other hand, note that we have approached the description of the acyclic and cellular covers of a non-nilpotent group with respect to an uncountable non-nilpotent group; these kind of problems have been widely studied in the recent literature, but usually under restrictive hypotheses of finiteness or nilpotency on the groups involved. See for example \cite{BCFS13}, \cite{FGS07}, \cite{FGSS07}, \cite{Fu11}.

\vspace{.7cm}

\textbf{Acknowledgments.} We warmly thank J\'{e}r\^{o}me Scherer for several discussions about the paper and for his thorough revision of the manuscript, which has led to a substantial improvement of the presentation. We also thank Pedro Guil and Fernando Muro for useful discussions. The first author expresses his gratitude to the Department of Mathematics of the University of Almer\'{i}a for their kind hospitality. We thank the referee for his/her detailed and sharp report.


\vskip 0.5 cm

\setlength{\baselineskip}{0.6cm}

Ram\'{o}n Flores

Departamento de Geometr\'{i}a y Topolog\'{i}a, Universidad de Sevilla-IMUS

E-41012 Sevilla, Spain, e-mail: {\tt ramonjflores@us.es}

\bigskip\noindent

Jos\'e L. Rodr\'{\i}guez

Departamento de Matem\'aticas,
Universidad de Almer\'{\i}a

E-04120 Almer\'{\i}a,
Spain, e-mail: {\tt
jlrodri@ual.es}

\end{document}